%% file: samplepaper.tex
\documentclass[orivec,runningheads]{llncs}

\usepackage[T1]{fontenc}
\usepackage{graphicx}

\usepackage{amsmath}
\usepackage{amssymb}
\usepackage{bm}

\usepackage[authoryear,round]{natbib}

\usepackage{hyperref}
\usepackage[nameinlink,noabbrev,capitalise]{cleveref}

\usepackage{algorithm}
\usepackage{algpseudocode}

\input{article_preamble.tex}

\begin{document}
	
	\title{Exploiting Variable Implications in Presolve for Mixed Integer Programming}
	
	\titlerunning{Exploiting Variable Implications in Presolve for Mixed Integer Programming}
	
	\author{
		Wei-Kun Chen\inst{1}
		\and
		Chang-Long Li\inst{1}
		\and
		Zhao-Wei Wang\inst{2}
		\and
		Yu-Hong Dai\inst{2}
		\and
		Zi-Shuo Li\inst{3}
		\and
		Meng Lu\inst{3}
	}
	
	\authorrunning{Chen et al.}
	
	\institute{
		School of Mathematics and Statistics,
		Beijing Institute of Technology,
		100081 Beijing, China\\
		\email{\{chenweikun, lichanglong\}@bit.edu.cn}
		\and
		Academy of Mathematics and Systems Science,
		Chinese Academy of Sciences,
		100190 Beijing, China\\
		School of Mathematical Sciences,
		University of Chinese Academy of Sciences,
		100049 Beijing, China\\
		\email{\{wangzhaowei, dyh\}@lsec.cc.ac.cn}
		\and
		Taylor Lab,
		Huawei Technologies Co., Ltd.,
		China\\
		\email{\{lizishuo4, lumeng22\}@huawei.com}
	}
	
	\maketitle
	
	\begin{abstract}
		
		Presolve for mixed integer programming (MIP) problems aims to eliminate redundant information, strengthen the formulation, and extract useful structural information for the subsequent branch-and-cut process.
		An important type of such structural information is the variable implications (VIs), which describe how a bound on a variable depends on a bound of a binary variable.	
		In this paper, we develop two new presolve techniques that exploit VIs to derive reductions for MIP problems.
		The first technique, called \emph{VI aggregation}, aggregates multiple VIs into a single inequality by using implications between a variable and a set of binary variables that form a clique.
		This aggregation can reduce the number of constraints and tighten the linear programming relaxation.
		The second technique, called \emph{VI-aware linear constraint propagation (LCP)}, builds on the standard LCP but incorporates VIs associated with the variable being tightened to derive more reductions and can derive tighter variable bounds.
		We show that although VI information is additionally considered, the tightest lower or upper bound of a variable can still be derived in linear time.
		Moreover, compared with a state-of-the-art approach in the literature, the proposed VI-aware LCP can derive tighter variable bounds.		
		Computational results on MIPLIB 2017 benchmark instances demonstrate the effectiveness of VI aggregation and VI-aware LCP in improving the performance of the open-source MIP solver HiGHS.
		In particular, using the two proposed presolve techniques, a reduction of 4\% in solving time and 6\% in node number on HiGHS can be achieved.
		
		\keywords{
			Mixed integer programming
			\and presolve
			\and variable implication
			\and constraint aggregation
			\and bound tightening
		}
		
	\end{abstract}
	
	\input{section_introduction.tex}

	\input{section_notation.tex}

	\input{section_aggregation.tex}

	\input{section_boundtightening.tex}

	\input{section_computational.tex}

	\input{section_conclusion.tex}
	
	\bibliographystyle{abbrvnat}
	\bibliography{shorttitles,library}
	
	\appendix
	\renewcommand{\thesection}{Appendix~\Alph{section}}
	\renewcommand{\thesubsection}{\Alph{section}.\arabic{subsection}}
	
	\input{section_appendix.tex}

\end{document}

%% file: article_preamble.tex
\newtheorem{assumption}{Assumption}

\usepackage[letterpaper,margin=1in]{geometry}
\usepackage{setspace}
\usepackage{multicol}
\usepackage{xspace}
\usepackage{algpseudocode}
\usepackage{algorithm}
\usepackage{todonotes}

\newcommand{\MIP}{\text{MIP}\xspace}
\newcommand{\LCP}{\text{LCP}\xspace}
\newcommand{\VI}{\text{VI}\xspace}
\newcommand{\VIs}{\text{VIs}\xspace}

\newcommand{\rev}[1]{{\color{black}#1}}

\newcommand{\I}{\mathcal{I}}

\newcommand{\B}{\mathcal{B}}
\newcommand{\C}{\mathcal{C}}
\newcommand{\CO}{\mathcal{O}}
\newcommand{\CS}{\mathcal{S}}
\newcommand{\D}{\mathcal{D}}
\newcommand{\M}{\mathcal{M}}
\newcommand{\N}{\mathcal{N}}

\newcommand{\X}{\mathcal{X}}
\newcommand{\XLP}{\mathcal{X}_{\text{L}}}
\newcommand{\K}{\mathcal{K}}
\newcommand{\G}{G}
\newcommand{\V}{\mathcal{V}}

\newcommand{\LB}{\text{LB}\xspace}
\newcommand{\UB}{\text{UB}\xspace}
\newcommand{\LBhat}{\widehat{\text{LB}}\xspace}
\newcommand{\UBhat}{\widehat{\text{UB}}\xspace}

\newcommand{\HiGHS}{{{HiGHS}}\xspace}

\newcommand{\Default}{\texttt{Default}\xspace}

\newcommand{\vbca}{\texttt{VIA}\xspace}
\newcommand{\scons}{\texttt{VILCP$^\prime$}\xspace}
\newcommand{\tobj}{\texttt{VILCP}\xspace}
\newcommand{\all}{\texttt{All}\xspace}
\newcommand{\settingAchterberg}{\citet{Achterberg2013a}\xspace}

\newcommand{\tblT}{\texttt{T}\xspace}
\newcommand{\tblN}{\texttt{N}\xspace}
\newcommand{\tblIns}{\texttt{\#Ins}\xspace}
\newcommand{\tblS}{\#\texttt{S}\xspace}
\newcommand{\tblTa}{\texttt{T}$_{\vbca}$\xspace}
\newcommand{\tblTb}{\texttt{T}$_{\tobj}$\xspace}

\usepackage{accents}
\newcommand{\dunderbar}{\underaccent{\bar}{d}}

\algnewcommand{\IfThenElse}[3]{
  \State \algorithmicif\ #1\ \algorithmicthen\ #2\ \algorithmicelse\ #3}
\algnewcommand{\IfThen}[2]{
  \State \algorithmicif\ #1\ \algorithmicthen\ #2}

\newcommand{\commentInline}[1]{\item[] /*~\texttt{#1}~*/}

%% file: section_introduction.tex
\section{Introduction}\label{section-introduction}

Presolve for mixed integer programming (\MIP) problems is a set of routines applied to eliminate redundant information, strengthen the formulation, and extract information from the problem that can be used to improve the efficiency of the subsequent branch-and-cut process.
Presolve can be very effective; according to the investigations in \citet{Achterberg2020} and \citet{Achterberg2013b}, presolve can frequently make the difference between a problem being intractable and easily solvable.
Therefore, presolve has become one of the most important ingredients contributing to the power of modern MIP solvers.

A wide range of presolve techniques have been developed in the literature.
One of the earliest contributions was made by \citet{Brearley1975a}, who presented linear constraint propagation (\LCP) to tighten variable bounds.
Presolve techniques for 0-1 integer programming problems were studied by \citet{Johnson1980,Guignard1981,Crowder1983}, and \citet{Hoffman1991}.
For general \MIP problems, \citet{Savelsbergh1994} investigated various presolve techniques, among which probing  tentatively sets some binary variables to 0 or 1, applies \LCP to derive better variable bounds, and extracts useful information such as stronger variable implications (\VIs) and better global variable bounds.
Recently, new approaches to enhance probing were developed by \citet{VonHolly-Ponientzietz2026} and \citet{Dai2026}.
\citet{Gamrath2015} and \citet{Gemander2020a} investigated three presolve techniques and five two-row/column presolve techniques, respectively, and discussed their computational impact on the open-source solver SCIP \citep{Achterberg2007}.
For linear programming (LP) problems, \citet{Andersen1995} published the results on presolve and postsolve procedures;
\citet{Zhang2026} proposed to use Fourier-Motzkin elimination to simplify LP problems.
For a comprehensive discussion of various presolve techniques for \MIP problems, we refer to \citep{Achterberg2020}. 
Recently, \citet{Gleixner2023}, \citet{Hoen2024}, and \citet{Hoen2026} introduced the solver-independent presolve library PaPILO for \MIP and LP problems,  which supports parallelization, multi-precision arithmetic, certification for presolve techniques (on 0-1 integer programming problems), and delta debugging.
Details on implementing presolve techniques within an MIP solver are presented in \citet{Suhl1994}, \citet{Achterberg2007}, and \citet{Weninger2016}.

While many presolve techniques primarily aim to reduce the problem size or strengthen the formulation, other techniques (such as probing) also uncover structural information that can be reused by later components of the branch-and-cut process.
An important type of such information is the \VIs.
A \VI is a constraint of the form $y \star ax+b$, where $a, b \in \mathbb{R}$ with $a \neq 0$, $x \in \{0,1\}$, and $\star \in \{\le, \ge\}$.
It
expresses the dependency of one bound of a variable on a bound of another binary variable \citep{Achterberg2007, Maher2017}.
Therefore, \VIs arise naturally from linear constraints involving exactly two variables with at least one of them being binary.
In addition, \VIs can also be detected implicitly from more general constraints \citep{Achterberg2007} or  identified by presolve techniques like probing.
Such relations can be used to, for example, enhance the cut separation \citep{Marchand2001,Bestuzheva2021}, guide the variable branching \citep{Achterberg2007}, and improve the primal heuristics \citep{Corduk2025,Gamrath2019,Salvagnin2025} in the subsequent branch-and-cut process.
To make \VIs available to these components, modern \MIP solvers maintain them in dedicated global data structures.
In particular, \VIs between binary variables are usually stored in the clique table, and \VIs between binary and non-binary variables are usually stored in the \VI graph \citep{Achterberg2007}.


Given the important role of \VIs in various components of modern MIP solvers, there are comparatively few works on their use in presolve.
{\citet{Atamturk2000} proposed to use \VIs to enhance the bound and coefficient strengthenings, and to extract more \VIs for 0-1 integer programming problems.}
\citet{Achterberg2020} proposed to use clique information to deduce redundant constraints in a combinatorial way.
{In the same paper, the authors also discussed a procedure called clique merging/strengthening, which aims to combine several clique constraints into a single inequality; see also \cite{Brito2021}}.
Besides, cliques can also be used to derive tighter variable bounds by exploiting the fact that at most one variable in a clique can be set to one \citep{Achterberg2020,Gemander2020a}.
Note that the above techniques focus only on the \VIs between binary variables; presolve techniques utilizing the \VIs between binary and non-binary variables are relatively rare.
One exception is the presolve technique proposed in \citet{Achterberg2013a} where tighter bounds for binary and non-binary variables can be derived from knapsack constraints and \VIs.
This is equivalent to applying a simplified version of probing (where both types of \VIs are used) on the binary variables of the cover implied by a knapsack constraint, and observing that at least one of them must be set to zero.
The computational results of \citet{Achterberg2013a} showed that this presolve technique can affect a substantial number of \MIP problems in their testbed and improve the performance of the \MIP solver CPLEX.
\subsection{Contributions and outline}

{The main motivation of this paper is to develop new presolve techniques that exploit \VIs including not only those between two binary variables but also those between binary and non-binary variables.}
In particular, two such presolve techniques are proposed to enhance the capability of \MIP solvers:

\begin{itemize}
\item [$\bullet$] The first technique, called \emph{\VI aggregation}, attempts to aggregate multiple \VIs into a single inequality using the clique information.
The basic idea is that if the lower or upper bound of a variable can be tightened by fixing different binary variables through \VIs
and these binary variables form a clique, then we can aggregate the corresponding \VIs into a single inequality valid for the problem.
This can not only tighten the LP relaxation but also open up new opportunities to  reduce the size of the problem.
The proposed \VI aggregation extends the standard clique merging  \citep{Achterberg2020,Brito2021}, where reductions on \VIs involving non-binary variables are additionally performed.
\item [$\bullet$] The second technique,  called \emph{\VI-aware \LCP}, builds on the standard \LCP but involves the \VIs to derive more presolve reductions.
Specifically, the standard \LCP tightens the bounds of a variable using variable bounds, a linear constraint, and integrality constraints. In contrast, our proposed \VI-aware \LCP additionally incorporates  \VIs associated with that variable, enabling the derivation of tighter bounds.
We show that although \VIs are additionally considered, deriving the tightest lower or upper bound  for a variable can still be conducted in linear time, allowing the VI-aware LCP to be embedded into an \MIP solver.
We analyze the relation of the proposed  \VI-aware \LCP to the state-of-the-art approach in \citet{Achterberg2013a}, and demonstrate that the bounds derived by our proposed \VI-aware \LCP could be tighter.
Moreover, we also develop two enhancements to further improve the effectiveness of the proposed \VI-aware \LCP.
%
%
\end{itemize}


We implement the \VI aggregation and \VI-aware \LCP in the open-source MIP solver HiGHS \citep{Huangfu2018}. 
Computational results on MIPLIB 2017 benchmark instances demonstrate the advantage of the proposed  \VI-aware \LCP  over the state-of-the-art approach in \cite{Achterberg2013a} and 
the effectiveness of the two proposed presolve techniques---\VI aggregation and \VI-aware \LCP---in improving the performance of HiGHS.
In particular, using the two proposed presolve techniques, a reduction of 4\% in solving time and 6\% in node number on \HiGHS can be achieved.

It deserves to be mentioned that, although the proposed {\VI aggregation} and \VI-aware \LCP are designed to tackle \MIP problems, they can also be
applied to solving mixed integer nonlinear programming problems, as long as linear inequalities are presented (obtained from constraints or via cutting plane techniques) and \VIs are maintained
in dedicated data structures.

The rest of the article is organized as follows.
\cref{section-notation} introduces the notation used throughout this paper.
\cref{section-aggregation,section-boundtightening} develop the {\VI aggregation} and \VI-aware \LCP, respectively.
\cref{section-computational} presents the numerical results.
Finally, \cref{section-conclusion} concludes the paper.

%% file: section_notation.tex
\section{Notation and background}\label{section-notation}

We use the following notation throughout the paper. 
Given a matrix $A\in\mathbb{R}^{m\times n}$, vectors 
$c,~ u\in \mathbb{R}^n$, $b \in \mathbb{R}^m$, 
and variables $x\in$ $\mathbb{R}^n$ with $x_j\in\mathbb{Z}$ for $j\in \I\subseteq \N := \left\{1,\cdots,n\right\}$, 
the \MIP problem can be written as
\begin{equation}
	\min \left\{c^\top x ~\mid~ Ax \le b,~ 0 \le x \le u,~ {x_j \in \mathbb{Z},~ \forall~j \in \I}\right\}.
	\label{MIP}
	\tag{MIP}
\end{equation}
For simplicity of discussion, we assume throughout the paper that the lower bounds are all zero, and the upper bounds $u_j$ for $j \in\N$ are all finite; however, many of our results can be generalized to cases where some of the lower bounds are not zero, or some of the upper bounds are infinite.
We denote the index set of constraints as $\M = \left\{1,\cdots,m\right\}$ and the feasible region of \eqref{MIP} as 
\begin{equation}\label{Xdef}
\X= \{ x\in \mathbb{R}^n ~ \mid ~ Ax \le b,~ 0 \le x \le u,~ {x_j \in \mathbb{Z},~ \forall~j \in \I} \}.
\end{equation}
The linear relaxation of $\X$ is denoted as $\XLP:=\{ x\in \mathbb{R}^n ~ \mid ~ Ax \le b,~ 0 \le x \le u\}$, obtained by dropping the integrality constraints on $x_j$ for $j \in \I$ from $\X$.
Given a vector $a \in \mathbb{R}^n$ and a subset $\CS \subseteq \N$, we define $a_{\CS}$ to be the vector $a$ restricted to indices in $\CS$.
Finally, we use $\B = \left\{j \in \I ~\mid~ u_j = 1\right\}$ to denote the index set of binary variables.

In this paper, we consider \VIs of the form:
\begin{subequations}
	\begin{multicols}{2}
		\noindent
		\begin{align}
			& x_i = 0 \rightarrow x_j \geq \ell_{ij}, ~\forall~ j \in \N^\geq_0(i),~ i \in \B,\label{impa1} \\
			& x_i = 1 \rightarrow x_j \geq \ell_{ij}, ~\forall~ j \in \N^\geq_1(i),~ i \in \B,\label{impa2}
		\end{align}
		\begin{align}
			& x_i = 0 \rightarrow x_j \leq u_{ij},    ~\forall~ j \in \N^\leq_0(i),~ i \in \B,\label{impa3} \\
			& x_i = 1 \rightarrow x_j \leq u_{ij},    ~\forall~ j \in \N^\leq_1(i),~ i \in \B,\label{impa4}
		\end{align} 
	\end{multicols}
\end{subequations}\noindent
where for each $i \in \B$, $\N_v^{\star}(i)\subseteq \N$ with $\star \in \{\ge , \le \}$ denotes the set of variables from which implied lower bounds  $\ell_{ij}$ or upper bounds  $u_{ij}$ can be derived by setting $x_i = v \in \left\{0,1\right\}$.
Throughout, we pose the following trivial assumption on the \VIs defined in \eqref{impa1}--\eqref{impa4}.
\begin{assumption}\label{assum1}
	(i) \VIs \rev{~in} \eqref{impa1}--\eqref{impa4} are non-redundant, that is,  $0 < \ell_{ij} \le u_j$ and $0 \le u_{ij} <  u_j$.
	\item[(ii)] For the \VI $x_i = v \rightarrow x_j \ge \ell_{ij}$ (respectively, $x_i = v \rightarrow x_j \le u_{ij}$) with $j \in \I$, we can replace $\ell_{ij}$ (respectively, $u_{ij}$) with $\lceil \ell_{ij} \rceil$ (respectively, $\lfloor u_{ij} \rfloor$) to obtain a tighter implied bound.
	Therefore, we can assume $\ell_{ij} \in \mathbb{Z}$ for all $i \in \B$ and $j \in \left(\N^\geq_0(i) \cup  \N^\geq_1(i)\right) \cap \I$, and $u_{ij} \in \mathbb{Z}$ for all $i \in \B$ and $j \in \left(\N^\leq_0(i) \cup  \N^\leq_1(i)\right) \cap \I$.

    \item[(iii)] Self \VIs are not considered, that is, $i \notin \N_v^{\star}(i)$ holds for any $\star \in \{\ge , \le \}$ and $v \in \{0,1\}$.
	\item[(iv)] 
	For two \VIs $x_i = 0 \rightarrow x_j \geq a$ and $x_i = 1 \rightarrow x_j \geq b$, we can replace them with a (new) lower bound constraint $x_j \geq \min\{a,b\}$; 
	for two \VIs $x_i = 0 \rightarrow x_j \leq a$ and $x_i = 1 \rightarrow x_j \leq b$, we can replace them with a (new) upper bound constraint $x_j \leq \max\{a,b\}$. 
	Therefore, we can assume $\N_0^\ge (i) \cap \N_1^\ge (i) =\varnothing $ and $\N_0^\le (i) \cap \N_1^\le (i) = \varnothing$ for all $i \in \B$.
	\item[(v)] For two \VIs $x_i = v \rightarrow x_j \ge a$ and $x_i = v \rightarrow x_j \le b$ with $v \in \{0,1\}$, if $a > b$, we can fix $x_i = 1-v$ (as $x_i = v$ makes an infeasible problem).
	Therefore, we can assume $\ell_{ij} \le u_{ij}$ for all $i \in \B$ and $j \in \left(\N_0^\ge (i) \cap \N_0^\le (i)\right) \cup \left(\N_1^\ge (i) \cap \N_1^\le (i)\right)$.
\end{assumption}

Note that the \VIs in \eqref{impa1}--\eqref{impa4} can be equivalently expressed as the following inequalities:
\begin{subequations}
	\begin{multicols}{2}
		\noindent
		\begin{align}
			& x_j \ge - \ell_{ij} x_i + \ell_{ij}, ~\forall~ j \in \N^\geq_0(i),~ i \in \B,\label{impc1} \\
			& x_j \ge \ell_{ij} x_i, ~\forall~ j \in \N^\geq_1(i),~ i \in \B,\label{impc2}
		\end{align}
		\begin{align}
			& x_j \le \left(u_{j} - u_{ij}\right) x_i + u_{ij},~\forall~ j \in \N^\leq_0(i),~ i \in \B,\label{impc3} \\
			& x_j \le \left(u_{ij} - u_j\right) x_i + u_j, ~\forall~ j \in \N^\leq_1(i),~ i \in \B. \label{impc4}
		\end{align} 
	\end{multicols}
\end{subequations}\noindent
This equivalence also implies that \VIs can be derived from constraints  of problem \eqref{MIP} involving exactly two variables with at least one being binary.
In addition to this, \VIs can also be derived from presolve techniques such as probing \citep{Savelsbergh1994}. 
In the following, we will use the \VIs in \eqref{impa1}--\eqref{impa4}  and \eqref{impc1}--\eqref{impc4} interchangeably. 

Depending on whether $j \in \B$, \VIs of the form \eqref{impa1}--\eqref{impa4} can be categorized into two types.
In particular, if $j \in \N \backslash \B$, they represent \VIs between binary and non-binary variables, which are stored in a data structure called \VI graph $G$ \citep{Achterberg2007,Achterberg2013a}.
If $j \in \B$, they represent \VIs between binary variables and therefore, $\ell_{ij} = 1$ and $u_{ij} = 0$.
In such cases, they are aggregated and stored in a data structure called clique table $\mathfrak{C}$, where each $\C=(\C^+, \C^-)\in \mathfrak{C}$ corresponds to the clique inequality $\sum_{i \in \C^+} x_i + \sum_{i \in \C^-}(1-x_i) \leq 1$ \citep{Atamturk2000,Achterberg2007}.
This structure avoids storing $\frac{|\C|(|\C|-1)}{2}$ pairwise \VIs: $(x_i = v_i) \rightarrow (x_j = w_j)$ for $i,~ j \in \C$ with $i \ne j$ (where $v_i =1$ for $i \in \C^+$ and $v_i =0$ otherwise; $w_j =0$ for $j \in \C^+$ and $w_j =1$ otherwise). 
Note that in many MIP solvers (e.g., HiGHS), \VIs between non-binary variables are not maintained.


Using \eqref{impa1}--\eqref{impa4}, one can easily access variables whose bounds can be tightened after fixing $x_i$.
Conversely, for a variable $x_j,~ j \in \N$, we can access binary variables that can derive tighter bounds for $x_j$ as follows:
\begin{subequations}
	\begin{multicols}{2}
		\noindent
		\begin{align}
			& x_j < \ell_{ij} \rightarrow x_i = 1,~\forall ~ i \in \B^{<}_1(j), \label{impb4}\\
			& x_j < \ell_{ij} \rightarrow x_i = 0,~\forall ~ i \in \B^{<}_0(j), \label{impb3}
		\end{align}
		\begin{align}
			& x_j > u_{ij} \rightarrow x_i = 1,~\forall ~i \in \B^{>}_1(j),     \label{impb2}\\
			& x_j > u_{ij} \rightarrow x_i = 0,~ \forall~i \in \B^{>}_0(j),     \label{impb1}
		\end{align} 
	\end{multicols}
\end{subequations}\noindent
where
\begin{subequations}
	\begin{multicols}{2}
        \noindent
        \begin{align}
			&\B_1^{<}(j) := \{ i \in \B \mid j \in \N_0^{\geq}(i) \}, \label{b1}\\
			&\B_0^{<}(j) := \{ i \in \B \mid j \in \N_1^{\geq}(i) \},\label{b2}
		\end{align}
        \begin{align}
			&\B_1^{>}(j) := \{ i \in \B \mid j \in \N_0^{\leq}(i) \},\label{b3} \\
    		&\B_0^{>}(j) := \{ i \in \B \mid j \in \N_1^{\leq}(i) \}.\label{b4} 
		\end{align} 
    \end{multicols}
\end{subequations}\noindent
From (iii), (iv), and (v) of \cref{assum1}, it must follow   
\begin{remark}\label{remark1}
		For $j \in \N$, it follows that (i) $j \notin  \B_v^{\star}(j)$ holds for any $\star \in \{< , > \}$ and $v \in \{0,1\}$; 
		(ii) $\B_0^< (j) \cap \B_1^< (j) = \varnothing$ and $\B_0^> (j) \cap \B_1^> (j) =\varnothing$; and
		(iii) $\ell_{ij} \le u_{ij}$ holds for all $i \in \left(\B_1^<(j) \cap \B_1^>(j)\right) \cup \left(\B_0^<(j) \cap \B_0^>(j)\right)$.
\end{remark}



%% file: section_aggregation.tex
\section{Variable implication aggregation}\label{section-aggregation}

In this section, we propose the \VI aggregation where
the basic idea is that, if the lower bound (upper bound, respectively) of variable $x_j$, $j \in \N$, can be tightened by fixing different binary variables via \VIs in \eqref{impc1}--\eqref{impc2} (\eqref{impc3}--\eqref{impc4}, respectively), and these binary variables form a clique, then we can aggregate such \VIs into a single inequality (called aggregated \VI), which is valid for \eqref{MIP} and dominates all individual \VIs being aggregated, thereby yielding a tighter LP relaxation.
If the \VIs being aggregated appear as constraints in \eqref{MIP}, they can be replaced by the resulting aggregated \VIs, leading to a reduction of the total number of constraints.

\subsection{Deriving aggregated implications}

We start with the following illustrative example. 
\begin{example}\label{ex-agg}
	Consider the following mixed integer set $\X \subseteq \mathbb{R}^5$:
	\begin{equation*}
		\X := \left\{\left(x_1,~ x_2,~ x_3,~ x_4~, x_5\right) ~\mid~ \eqref{exvb-1}\text{--}\eqref{exvb-5}\right\},
	\end{equation*}
	where 
	\begin{subequations}
		\begin{multicols}{2}
			\noindent
			\begin{align}
				\label{exvb-1}
				&x_5 \ge x_1, \\
				\label{exvb-2}
				&x_5 \ge 2x_2, \\
				\label{exvb-3}
				&x_5 \ge 3x_3, \\
				\label{exvb-4}
				&x_5 \ge 4x_4,
			\end{align}
			\begin{align}
				\label{exclique-1}
				&x_1 + x_2 \le 1, \\
				\label{exclique-2}
				&x_2 + x_3 \le 1, \\
				\label{exclique-3}
				&x_1 + x_4 \le 1, \\
				\label{exvb-5}		
				&x_i \in \{0,1\},~ i = 1, 2, 3, 4, ~ x_5 \in [0, +\infty).
			\end{align} 
		\end{multicols}
	\end{subequations}\noindent
	From the clique inequality \eqref{exclique-1}, fixing any single binary variable of $x_1,x_2$ to one immediately forces the other variable to be zero.
	Therefore, we can aggregate the \VIs in \eqref{exvb-1}--\eqref{exvb-2} into a single inequality:
	\begin{equation}\label{exagg-cons}
		x_5 \ge x_1 + 2x_2,
	\end{equation}
	which is valid for the mixed integer set $\X$.
	After adding \eqref{exagg-cons} into $\X$, the \VIs in \eqref{exvb-1}--\eqref{exvb-2} are dominated by \eqref{exagg-cons} and therefore can be removed from $\X$; that is, 
	\begin{equation}\label{ex-xprime}
		\X=\X^\prime := \left\{\left(x_1,~ x_2,~ x_3,~ x_4~, x_5\right) ~\mid~ \eqref{exvb-3}\text{--}\eqref{exvb-5},~ \eqref{exagg-cons}\right\}.
	\end{equation}
	Note that the number of constraints used to characterize $\X^\prime$ is smaller than that used to characterize $\X$.
	Moreover, adding inequality \eqref{exagg-cons} leads to a tighter linear relaxation, i.e., $\XLP^\prime \subsetneq \XLP$.
	Indeed, as \eqref{exagg-cons} implies \eqref{exvb-1}--\eqref{exvb-2}, $\XLP^\prime \subseteq \XLP$;
	on the other hand,  $\hat{x} := \left(\frac{1}{2},~ \frac{1}{2},~0,~0,~ 1\right)  \in \XLP$ but  $\hat{x} \notin \XLP^\prime$. 
	Similarly, we can explore the clique information \eqref{exclique-2}--\eqref{exclique-3}, and aggregate constraints in \eqref{exvb-1}--\eqref{exvb-4} as $x_5 \ge 2 x_2 + 3x_3$ and $x_5 \ge x_1 + 4x_4$. 
\end{example}

The above procedure of aggregating \VIs is formalized below.
Without loss of generality, for a given variable $x_j,~ j \in \N$, we consider the aggregation of the \VIs in \eqref{impb3}, which can be written as:
\begin{equation}\label{aggsystem}
	x_j \geq \ell_{ij} x_i,~\forall~i \in \B_0^<(j).
\end{equation}
For a clique constraint $\sum_{i \in \C} x_i \leq 1$ with $\C \subseteq \B_0^<(j)$, it is straightforward that the \VIs $	x_j \geq \ell_{ij} x_i$ with $i \in \C$ in \eqref{aggsystem} can be aggregated into a new inequality (called aggregated \VI) $x_j \geq \sum_{i \in \C} \ell_{ij} x_i$, which dominates all individual \VIs $x_j \geq \ell_{ij} x_i$ with $i \in \C$.
Note that for \VIs in \eqref{impb4}, by complementing $x_i$ as $\bar{x}_i:=1-x_i$, we obtain
\begin{equation}\label{tmptmp}
	x_j \ge - \ell_{ij} x_i + \ell_{ij} ~\iff~ x_j \ge \ell_{ij} \bar{x}_i,~ \forall~ i \in \B_1^<(j).
\end{equation}
Therefore, when considering aggregation of \VIs, we also consider those in \eqref{tmptmp}, which enables us to construct a stronger aggregated inequality:
\begin{equation}\label{agg-single}
	x_j	\ge \sum_{i \in \C^+} \ell_{ij} x_i +  \sum_{i \in \C^-} \ell_{ij} \bar{x}_i,  
\end{equation}
where $(\C^+, \C^-)$ is a clique with $\C^+ \subseteq  \B_0^<(j)$ and $\C^-\subseteq \B_1^<(j)$. 
Note that since $\B_0^< (j) \cap \B_1^< (j) = \varnothing$ (as shown in (ii) of \cref{remark1}), it follow $\C^+\cap \C^-=\varnothing$.
Also note that for \VIs in \eqref{impb2}--\eqref{impb1} (where the upper bound of $x_j$ is considered), using the complement $\bar{x}_j = u_j - x_j$, we can transform them into \VIs of the form  \eqref{impb4}--\eqref{impb3} and therefore, we can derive similar aggregated \VIs.

Three remarks on the \VI aggregation are in order.
First, some of the \VIs in \eqref{aggsystem}--\eqref{tmptmp} appear as constraints in \eqref{MIP}, while others are detected from the whole problem by presolve techniques (such as probing) and do not appear as constraints in \eqref{MIP}.
Therefore, the number of constraints in \eqref{MIP} will be reduced only if some \VIs $x_j \geq \ell_{ij} x_i$ for $i \in \C^+$ or $x_j \geq \ell_{ij} \bar{x}_i$ for $i \in \C^-$ appear as constraints in  \eqref{MIP}. 
In our implementation, the aggregated \VI in \eqref{agg-single} is added only when at least one \VI appears as a constraint in \eqref{MIP}.
Second, if the clique  $\C$ is not maximal (i.e., there exists \rev{a clique} such that $\bar{\C}\subseteq  \B_0^<(j)\cup \B_1^<(j)$ and $\C \subsetneq \bar{\C}$ hold), then inequality \eqref{agg-single} is dominated by the inequality constructed using the maximal clique $\bar{\C}$.
Therefore, in our implementation, we only consider the case  that $\C$ is maximal.
Third, it is interesting to observe that if $x_j \in \{0,1\}$ in \eqref{agg-single}, then $\ell_{ij} = 1$; therefore, inequality \eqref{agg-single} reduces to
\begin{equation}
	\sum_{i \in \C^+} x_i  +\sum_{i \in \C^-} \bar{x}_i \le x_j\Longleftrightarrow\sum_{i \in \C^+} x_i + \sum_{i \in \C^- \cup \{j\} } \bar{x}_i \le 1, 
\end{equation}
which is a (strengthened) clique inequality that involves one more binary variable in the underlying clique.
This procedure of extending an existing clique is called \textit{clique merging/strengthening} \citep{Achterberg2020,Brito2021}.
Therefore, our proposed \VI aggregation can be viewed as a generalization of the clique \rev{merging} that additionally considers reduction on non-binary variables.

\subsection{Implementation}
 
Next, we attempt to derive a small subset of maximal cliques such that the corresponding aggregated \VIs can imply all \VIs in \eqref{aggsystem}--\eqref{tmptmp}.
To achieve this, we need the concept of clique covers introduced below.

\begin{definition}\label{cliquecover}
	A clique cover of an index set of binary variables $\CS$, denoted as $\mathfrak{S}$, is a subset of $\mathfrak{C}$ satisfying $\mathcal{S} = \cup_{\C \in \mathfrak{S}} \C$ (in other words, for every index $i \in \mathcal{S}$, there exists some clique $\C \in \mathfrak{S}$ such that $i \in \C$).
	It is minimal if $\mathcal{S} \neq \cup_{\C \in \mathfrak{S} \backslash \left\{\C^\prime\right\}} \C$ holds for any $\C^\prime \in \mathfrak{S}$.
\end{definition}
\noindent Letting $\mathfrak{S} = \{\C_1, \ldots, \C_q \}$ be a clique cover of $\B_0^<(j)\cup \B_1^<(j)$ (where $q\in \mathbb{Z}_+$), all \VIs in \eqref{aggsystem}--\eqref{tmptmp} can be aggregated into $q$ inequalities:
\begin{equation}\label{impl-aggregated}
	x_j \geq \sum_{i \in \C^+_p} \ell_{ij} x_i +  \sum_{i \in \C^-_p} \ell_{ij} (1-x_i)   , ~\forall~p \in \left\{1,\ldots,q \right\}.
\end{equation}
Here, the $p$-th aggregated \VI (i.e., the inequality constructed using $\C_p$ in \eqref{impl-aggregated}) implies the following \VIs: 
\begin{equation*}
	x_j \ge \ell_{ij}x_i , ~\forall~ i \in \C^+_p,~x_j \ge \ell_{ij}(1-x_i) , ~\forall~ i \in \C^-_p.
\end{equation*}

Note that in general, given the clique table $\mathfrak{C}$, there may exist different minimal clique covers of $\B_0^<(j)\cup \B_1^<(j)$; finding the one with minimum number of cliques could be computationally expensive.
Therefore, we instead use the heuristic approach summarized in \cref{alg:clique-cover} to construct a minimal clique cover of $\B_0^<(j)\cup \B_1^<(j)$. 
Specifically, in step 3 of  \cref{alg:clique-cover}, we initialize $\K \gets \B_0^<(j)\cup \B_1^<(j)$ as the set of uncovered indices, and $\mathfrak{S} = \varnothing$ as the clique cover.
In steps 4--7 of \cref{alg:clique-cover}, we find a  clique $\C $ in $\mathfrak{C}$ that maximizes the number of uncovered indices $|\C\cap\K|$.
Finally, in steps 8--12, we remove some redundant cliques to ensure that the resulting clique cover is minimal.

\begin{algorithm}[t]
	\footnotesize
	\caption{A procedure to construct a minimal clique cover of $\B_0^<(j)\cup \B_1^<(j)$}
	\label{alg:clique-cover}
	\begin{algorithmic}[1]
		\State \textbf{Input:} Variable index $j \in \N$, binary variable set $\B_0^<(j)\cup \B_1^<(j)$, and clique table $\mathfrak{C}$.
		\State \textbf{Output:} A clique cover $\mathfrak{S}$ of $\B_0^<(j)\cup \B_1^<(j)$.
		\State Initialize $\K \gets \B_0^<(j)\cup \B_1^<(j)$ and $ \mathfrak{S} = \varnothing$.
		\While{$\K \neq \varnothing$}
			\State Find a clique $\C \in \mathfrak{C}$ that  maximizes $\left|\C \cap \K\right|$. 
			\State Set $\mathfrak{S} \gets \mathfrak{S} \cup \left\{{\C \cap \K}\right\}$ and $\K \gets \K \backslash {\C}$.
		\EndWhile

		\For{each $\mathcal{C}\in\mathfrak{S}$}
		\If{$ \bigcup_{\mathcal{C}'\in\mathfrak{S}\setminus\{\mathcal{C}\}}\mathcal{C}'=\B_0^<(j)\cup \B_1^<(j)$}
		\State $\mathfrak{S} \gets \mathfrak{S} \setminus \{\mathcal{C}\}$.
		\EndIf
		\EndFor
		
	\end{algorithmic}
\end{algorithm}

\begin{example}
	Consider the mixed integer set $\X$ defined by \eqref{exvb-1}--\eqref{exvb-5} in \cref{ex-agg}, where the set of maximal cliques is $\mathfrak{C}= \{ \{1,2\}, ~\{2,3\},~\{1,4\} \}$. 
	We want to derive a minimal clique cover of the set $\B_0^<(5) = \left\{1,2,3,4\right\}$.
	Applying \cref{alg:clique-cover}, we obtain a minimal clique cover $\mathfrak{S} = \left\{\left\{2,3\right\}\right.$, $\left.\left\{1,4\right\}\right\}$.
	We can derive the aggregated \VIs $x_5 \ge 2 x_2 + 3x_3$ and $x_5 \ge x_1 + 4x_4$ by utilizing the two maximal cliques, which imply all \VIs in \eqref{exvb-1}--\eqref{exvb-4}.
\end{example}

In order to remove more \VI constraints  from \eqref{MIP}, we use a two-phase approach to construct the 
minimal cover of $\B_0^<(j)\cup \B_1^<(j)$.
Specifically, in the first phase, we collect \VIs in \eqref{aggsystem}--\eqref{tmptmp} that appear as constraints in problem \eqref{MIP} as index sets $\bar{\B}_0^<(j) \subseteq \B_0^<(j)$ and $\bar{\B}_1^<(j) \subseteq \B_1^<(j)$ (i.e., $i \in \bar{\B}_0^<(j)$ or $i \in \bar{\B}_1^<(j)$ implies that $x_j \ge \ell_{ij} x_i $ or $x_j \ge \ell_{ij} \left(1 - x_i\right) $ arises as a constraint in problem \eqref{MIP}), and 
use \cref{alg:clique-cover} to determine 
 a minimal clique cover ${\mathfrak{S}} = \left\{\C_1,\ldots,\C_q\right\}$ of $\bar{\B}_0^<(j) \cup \bar{\B}_1^<(j)$.
In the second phase, we attempt to insert the indices in $\B_0^<(j) \backslash \bar{\B}_0^<(j)$ and $\B_1^<(j) \backslash \bar{\B}_1^<(j)$ into existing cliques in $\mathfrak{S}$, so as to strengthen the aggregated \VIs. 
Notice that this two-phase implementation guarantees that  each aggregated \VI implies at least one distinct \VI constraint in \eqref{MIP}, thereby avoiding increasing the number of constraints in the original problem.

%% file: section_boundtightening.tex
\section{Variable implication-aware linear constraint propagation}\label{section-boundtightening}

In this section, we first briefly review the \LCP and discuss its weakness in tightening variable bounds---the underlying optimization problems for computing the bounds of a variable only consider variable bounds, a linear constraint, and integrality constraints.
Then, we propose to enhance \LCP with \VIs where the underlying optimization problems for computing the bounds of a variable further consider the \VIs associated with that variable.
We show that the new optimization problems can still be solved in linear time.
Subsequently, we analyze the relation of the proposed \VI-aware \LCP to the state-of-the-art approach in \citet{Achterberg2013a}.
Finally, we provide the implementation details and two enhancements of the proposed \VI-aware \LCP.

\subsection{Review of linear constraint propagation}\label{classicdp}


We first review the \LCP, which is now one of the key ingredients in all modern MIP solvers \citep{Achterberg2007,Achterberg2020}.
Consider a single constraint, the variable bounds, and the integrality constraints in \eqref{MIP}:
\begin{subequations}
	\begin{align}
		& a_r x_r + a_{\N \backslash \{r\}}^\top x_{\N \backslash \{r\}} \leq b_0,\label{linear}\\
		& 0 \leq x \leq u,\label{bounds}\\
		& x_{\I} \in \mathbb{Z}^{\left|\I\right|}, \label{intCons} 
	\end{align}
\end{subequations}
where we recall that $\N=\{1, \ldots n\}$, $a \in \mathbb{R}^n $, $b_0 \in \mathbb{R}$, and $0 < u_j < +\infty $ for all $j \in \N$.
For simplicity of discussion, we pose the following trivial assumption throughout this section.
\begin{assumption}\label{assum2}
	 $a_j \geq 0$ holds for all $j \in \N$.
\end{assumption}
\noindent
Indeed, if $a_j < 0$, we can complement variable $x_j$ as $\bar{x}_j = u_j - x_j$ such that the coefficient of $\bar{x}_j$ is $-a_j > 0$.
Applying the \LCP, a tighter upper bound $b_0 / a_r$ for variable $x_r$ can be obtained if  $a_r > 0$ and  $b_0 / a_r< u_r$.
If $x_r$ is an integer variable (i.e., $r \in \I$), we can round down this bound to $\lfloor b_0 / a_r \rfloor$.
The implementation of \LCP is very efficient; overall, tightening bounds for all variables can be conducted with \rev{the} complexity \rev{of} $\CO(n)$.

However, the above approach suffers from two limitations: (i) only the upper bounds of variables with non-zero coefficients in \eqref{linear} can be tightened; 
(ii) even for variables whose bounds can be tightened, the new bounds may be  weak.
To see the latter, we note that the new upper bound is equal to the optimal value of the following optimization problem:
\begin{align}
	& \max\left\{x_r ~\mid~ \eqref{linear}\text{--}\eqref{intCons}\right\},
	\label{single-prob-2}
\end{align}
where only the linear constraint \eqref{linear}, the variable bounds \eqref{bounds}, and the integrality constraints \eqref{intCons} are considered. 
In the following, we will incorporate \VIs in \eqref{impa1}--\eqref{impa4} into the \LCP so that \emph{tighter bounds for more variables can be detected}. 

\subsection{Enhancing linear constraint propagation by utilizing variable implications}\label{subsect:boundtightening-theory}

In this section, we attempt to enhance \LCP by taking \VIs in  \eqref{impa1}--\eqref{impa4} into consideration.
Our approach is to involve \VIs of $x_r$ when deriving the bounds for variable $x_r$. 
In particular, for $r \in \B$, we attempt to derive {the tightest lower and upper bounds of $x_r$ from the following system}:
\begin{subequations}
	\begin{multicols}{2}
		\noindent
		\begin{align}
			& \eqref{linear}\text{--}\eqref{intCons}, \nonumber \\
			& x_r = 0 ~\to~ x_j \ge \ell_{rj},~ \forall~ j \in \N_0^\ge(r),  \label{dp-imp-binary1} \\ 
			& x_r = 1 ~\to~ x_j \ge \ell_{rj},~ \forall~ j \in \N_1^\ge(r),  \label{dp-imp-binary2}
		\end{align}
		\begin{align}
			& \tag*{}\\
			& x_r = 0 ~\to~ x_j \leq u_{rj},    ~\forall~ j \in \N^\leq_0(r),\label{dp-imp-binary3} \\
			& x_r = 1 ~\to~ x_j \leq u_{rj},    ~\forall~ j \in \N^\leq_1(r). \label{dp-imp-binary4}
		\end{align} 
	\end{multicols}
\end{subequations}\noindent
For $r \in \N \backslash \B$, we attempt to derive the tightest lower and upper bounds of $x_r$ from the following system:
\begin{subequations}
	\begin{multicols}{2}
		\noindent
		\begin{align}
			& \eqref{linear}\text{--}\eqref{intCons}, \nonumber \\
			& x_r < \ell_{ir} ~\to~ x_i = 1, ~\forall~ i \in \B_1^<(r), \label{dp-imp-nonb1}\\
			& x_r < \ell_{ir} ~\to~ x_i = 0, ~\forall~ i \in \B_0^<(r), \label{dp-imp-nonb2}
		\end{align}
		\begin{align}
			& \tag*{}\\
			& x_r > u_{ir}    ~\to~ x_i = 1, ~\forall~ i \in \B_1^>(r), \label{dp-imp-nonb3} \\
			& x_r > u_{ir}    ~\to~ x_i = 0, ~\forall~ i \in \B_0^>(r). \label{dp-imp-nonb4} 
		\end{align} 
	\end{multicols}
\end{subequations}\noindent
In the following, we consider the two cases separately.

\subsubsection{The case $r \in \B$.}\label{case-binary}

For $r \in \B$, deriving the tightest bounds $\LB \le x_r \le \UB$ from the system \eqref{linear}--\eqref{intCons} and \eqref{dp-imp-binary1}--\eqref{dp-imp-binary4} is equivalent to solving the following two optimization problems:
\begin{subequations}
	\begin{align}
		\LB &= \min\left\{x_r ~\mid~ \eqref{linear}\text{--}\eqref{intCons},~\eqref{dp-imp-binary1}\text{--}\eqref{dp-imp-binary4} \right\}, \label{bin-prob1}\\
		\UB &= \max\left\{x_r ~\mid~ \eqref{linear}\text{--}\eqref{intCons},~\eqref{dp-imp-binary1}\text{--}\eqref{dp-imp-binary4} \right\}. \label{bin-prob2}
	\end{align}
\end{subequations}
As \eqref{dp-imp-binary1}--\eqref{dp-imp-binary4} are \VIs that can be written as linear constraints of the form \eqref{impc1}--\eqref{impc4}, problems \eqref{bin-prob1}--\eqref{bin-prob2} are MIP problems. 
However, in the context of implementing a presolve technique, using an off-the-shelf MIP solver for solving problems \eqref{bin-prob1}--\eqref{bin-prob2} could be computationally demanding.
To develop an efficient approach for computing $\LB$ and $\UB$ without the need for calling an \MIP solver, we introduce the notation of implied activity.

\begin{definition}\label{defbin}
	For linear constraint \eqref{linear}, 
	the \textit{implied activity} $w_r(v)$ with respect to $r \in \B$ and $v \in \left\{0,1\right\}$  is defined as
	\begin{equation}\label{tmeq1}
		w_r(v) := \min_{x_r =v} \left\{\sum_{j \in \N} a_j x_j ~\Bigm|~  \eqref{bounds}\text{--}\eqref{intCons},~\eqref{dp-imp-binary1}\text{--}\eqref{dp-imp-binary4} \right\}.
	\end{equation}
\end{definition}
\noindent By definition, $w_r(v) \le b_0$ holds if and only if there exists a solution ${x}$ with ${x}_r = v$  satisfying \eqref{linear}--\eqref{intCons} and \eqref{dp-imp-binary1}--\eqref{dp-imp-binary4}.
This enables us to reformulate \eqref{bin-prob1} and \eqref{bin-prob2} as the following two optimization problems:
\begin{subequations}
	\begin{align}
		\label{re-bin-0}
		\LB &= \min\left\{v \in \left\{0,1\right\} ~\mid~ w_r(v) \le b_0 \right\},\\
		\label{re-bin-1}
		\UB &= \max\left\{v \in \left\{0,1\right\} ~\mid~ w_r(v) \le b_0 \right\}.
	\end{align}
\end{subequations}
Therefore, it must follow
\begin{proposition} \label{0-1prop}
(i) $w_r(0) > b_0$ holds if and only if $\LB = 1$;
(ii) $w_r(1) > b_0$ holds if and only if $\UB = 0$.
\end{proposition}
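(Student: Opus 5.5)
The plan is to argue directly from the reformulations \eqref{re-bin-0}--\eqref{re-bin-1}. These rest on the observation stated right after \cref{defbin}: $w_r(v) \le b_0$ holds if and only if some $x$ with $x_r = v$ satisfies \eqref{linear}--\eqref{intCons} and \eqref{dp-imp-binary1}--\eqref{dp-imp-binary4}.

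I will first make that observation rigorous. For fixed $v$, the feasible set in \eqref{tmeq1} is a mixed integer set in which every variable is bounded by \eqref{bounds}, and the \VIs have the linear form \eqref{impc1}--\eqref{impc4}. If this set is nonempty, the minimum is attained, and a minimizer $x^*$ satisfies \eqref{linear} exactly when $w_r(v) = a^\top x^* \le b_0$. Conversely, any feasible $x$ with $x_r = v$ gives $w_r(v) \le a^\top x \le b_0$. If the set is empty, the convention $w_r(v) = +\infty > b_0$ keeps the equivalence valid.

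Next I will set up the standing assumption that the full system \eqref{linear}--\eqref{intCons}, \eqref{dp-imp-binary1}--\eqref{dp-imp-binary4} is feasible. If it is not, presolve simply declares the problem infeasible; moreover, $\LB$ and $\UB$ are then not meaningful values in $\{0,1\}$. Under this assumption, and since $x_r \in \{0,1\}$, the set $V := \{v \in \{0,1\} \mid w_r(v) \le b_0\}$ is nonempty, and $\LB = \min V$, $\UB = \max V$.

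For (i): if $w_r(0) > b_0$, then $0 \notin V$. Since $V$ is nonempty, $V = \{1\}$ and hence $\LB = 1$. Conversely, if $\LB = 1$, then $0 \notin V$, which means $w_r(0) > b_0$. Part (ii) is symmetric: $w_r(1) > b_0$ if and only if $1 \notin V$, if and only if $V = \{0\}$, if and only if $\UB = 0$.

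The only delicate point is the feasibility caveat. Without it, $w_r(0) > b_0$ and $w_r(1) > b_0$ could hold simultaneously, and then $\LB$ would be an optimum over an empty set. I will therefore state the assumption explicitly, noting that in the infeasible case presolve detects infeasibility rather than tightening a bound. Everything else is definitional.
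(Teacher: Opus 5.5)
Your proof is correct and takes the same route as the paper, which reads the proposition directly off the reformulation \eqref{re-bin-0}--\eqref{re-bin-1}, that is, off the equivalence between $w_r(v) \le b_0$ and the existence of a feasible point with $x_r = v$. Your explicit feasibility assumption is a worthwhile addition: the paper leaves it implicit, and without it (when both $w_r(0) > b_0$ and $w_r(1) > b_0$) the implications $w_r(0) > b_0 \Rightarrow \LB = 1$ and $w_r(1) > b_0 \Rightarrow \UB = 0$ would fail under the convention $\min\varnothing = +\infty$.
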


From \cref{0-1prop}, determining the lower and upper bounds \LB and \UB for binary variable $x_r$ is equivalent to computing  $w_r(0)$ and $w_r(1)$ and checking whether $w_r(0)>b_0$ or $w_r(1)>b_0$ holds.
The computation of $w_r(0)$ and $w_r(1)$  can be done by analyzing \VIs in \eqref{dp-imp-binary1}--\eqref{dp-imp-binary4}.
In fact, for $w_r(0)$, 
it follows from $x_r = 0$, \eqref{dp-imp-binary1}, and \eqref{dp-imp-binary3} that $x_j \ge \ell_{rj}$ \rev{holds} for $j \in \N^{\ge}_0(r)$ and $x_j \le u_{rj}$ \rev{holds} for $j \in \N^\le_0(r)$.
Note that from \cref{assum1}(v), it must follow that $\ell_{rj} \le u_{rj}$ \rev{holds} for all $j \in \N_0^\ge(r) \cap \N_0^\le(r)$.
Therefore, one optimal solution $x^*$ for problem \eqref{tmeq1} is given as
\begin{equation*}
	x^*_j = \ell_{rj}, ~\forall~j \in \N_0^\ge(r), ~x^*_j = 0,~\forall~j \in \N \backslash \N_0^\ge(r).
\end{equation*}
Thus,
\begin{equation}
	w_r(0) = \sum_{\substack{j \in \N^{\ge}_0(r)}} a_j \, \ell_{rj}.	\label{w0}
\end{equation}
Similarly, we can compute
\begin{equation}\label{w1}
w_r(1) = \sum_{\substack{j \in \N^{\ge}_1(r)}} a_j \, \ell_{rj} + a_r.
\end{equation} 
Note that from \eqref{w0} and \eqref{w1}, $w_r(0)$ and $w_r(1)$ are determined only by \eqref{linear}--\eqref{intCons} and \eqref{dp-imp-binary1}--\eqref{dp-imp-binary2}, which implies that \VIs in \eqref{dp-imp-binary3}--\eqref{dp-imp-binary4} can be omitted when computing \eqref{w0} and \eqref{w1}. 
Therefore, for variable $x_r$, the overall complexity for the computation of lower and upper bounds  \eqref{bin-prob1} and \eqref{bin-prob2} is $\CO\left(\left|\N^{\ge}_0(r)\right| + \left|\N^{\ge}_1(r)\right|\right)$.

\begin{remark}\label{remark2}
	Note that if $x_r$ can be fixed by \cref{0-1prop}, then bounds for other variables can also be derived by the \VIs \rev{~in} \eqref{dp-imp-binary1}--\eqref{dp-imp-binary4}.
	Specifically, if $x_r = 0$, then from \eqref{dp-imp-binary1} and \eqref{dp-imp-binary3}, we obtain $x_j \ge \ell_{rj}$ for  $j \in \N_0^\ge(r)$ and $x_j \le u_{rj}$ for $j \in \N_0^\le(r)$;
	if $x_r = 1$, then from \eqref{dp-imp-binary2} and \eqref{dp-imp-binary4}, we obtain $x_j \ge \ell_{rj}$ for $j \in \N_1^\ge(r)$ and $x_j \le u_{rj}$ for $j \in \N_1^\le(r)$. 
\end{remark}


\begin{example}\label{example1}
	Consider the following linear constraint, variable bounds, and integrality constraints:
		\begin{align}
		 x_2 + 0.9x_3 + 0.5x_5 \le 2 ,~ x_i \in \left\{0,1\right\},~ i = 1,2,3,4,~ x_5 \in \left[0,3\right], \label{ex}
		\end{align}
	and the following \VIs:
	\begin{subequations}
		\begin{multicols}{2}
			\noindent
			\begin{align}
				&x_1 = 0 \to x_2 \ge 1, \label{e-imp1} \\
				&x_1 = 0 \to x_3 \ge 1, \label{e-imp2} \\
				&x_1 = 0 \to x_5 \ge 0.4, \label{e-imp3} \\
				&x_1 = 0 \to x_5 \le 0.5, \label{e-x4}
			\end{align}
			\begin{align}
				&x_1 = 1 \to x_4 \ge 1, \label{e-x5} \\
				&{x_5 > 0.5 \to x_2 = 0}, \label{ex-D} \\
				&x_5 < 1 \to x_2 = 1 \label{e-imp4}, \\
				&x_5 > 2 \to x_3 = 1 \label{e-imp5}.
			\end{align} 
		\end{multicols}
	\end{subequations}\noindent
	As the coefficient of variable $x_1$ is zero, the bounds of variable $x_1$ cannot be tightened by applying the standard \LCP on \eqref{ex}.
	Now, consider the \VI-aware \LCP.
	For binary variable $x_1$,
	we compute that
	\begin{align*}
		w_1(0) &= \min \bigl\{x_2 + 0.9x_3 + 0.5x_5 \,\bigm|\, x_2 \ge 1,\ x_3 \ge 1,\ 0.4 \le x_5 \le 0.5, \notag\\
		&\qquad x_2,\ x_3,\ x_4 \in \{0,1\},\ x_5 \in [0,3]\bigr\} = 2.1,\\
		w_1(1) &= \min \bigl\{x_2 + 0.9x_3 + 0.5x_5 \,\bigm|\, x_4 \ge 1,\ x_2,\ x_3,\ x_4 \in \{0,1\},\ x_5 \in [0,3]\bigr\} = 0.
	\end{align*}
	As $w_1(0) > 2$, 
	we can fix $x_1 = 1$.
	Moreover, using the \VI \eqref{e-x5}, we can additionally fix $x_4 = 1$.
	This demonstrates that equipped with \VIs, the \LCP can derive tighter variable bounds.

\end{example}


\subsubsection{The case $r \in \N \backslash \B$.}\label{case-nonbinary}

Next, we attempt to derive the tightest lower and upper bounds of $x_r$ from the system \eqref{linear}--\eqref{intCons} and \eqref{dp-imp-nonb1}--\eqref{dp-imp-nonb4}.
We first consider the case $r \in \N\backslash \I$, i.e., $x_r$ is a continuous variable.
In this case, 
deriving the tightest bounds $\LB \le x_r \le \UB$ from the system \eqref{linear}--\eqref{intCons} and \eqref{dp-imp-nonb1}--\eqref{dp-imp-nonb4} is equivalent to solving the following two optimization problems:
\begin{subequations}
	\begin{align}
		\LB &= \min\left\{x_r ~\mid~ \eqref{linear}\text{--}\eqref{intCons},~\eqref{dp-imp-nonb1}\text{--}\eqref{dp-imp-nonb4} \right\}, \label{nonb-prob1}\\
		\UB &= \max\left\{x_r ~\mid~ \eqref{linear}\text{--}\eqref{intCons},~\eqref{dp-imp-nonb1}\text{--}\eqref{dp-imp-nonb4} \right\}. \label{nonb-prob2}
	\end{align}
\end{subequations}

{Note that for $i \in \B_0^<(r) \cap \B_1^>(r)$ with $u_{ir} < \ell_{ir}$, if $x_r \in (u_{ir}, \ell_{ir})$, then  \eqref{dp-imp-nonb2} implies $x_i = 0$, but \eqref{dp-imp-nonb3} implies $x_i = 1$, a contradiction;
therefore, problems \eqref{nonb-prob1}--\eqref{nonb-prob2} with $x_r \in (u_{ir}, \ell_{ir})$ are infeasible.
Similarly, for $i \in \B_0^>(r) \cap \B_1^<(r)$ with $u_{ir} < \ell_{ir}$,  problems \eqref{nonb-prob1}--\eqref{nonb-prob2} with $x_r \in (u_{ir}, \ell_{ir})$ are also infeasible.
Let
\begin{align}
	&\D := \cup_{i\in
	\left(\B^<_1(r)\cap \B^>_0(r)\right)
	\cup
	\left(\B^<_0(r)\cap \B^>_1(r)\right),~
	u_{ir} < \ell_{ir}} \left(u_{ir}, \ell_{ir}\right), \label{infdefine}
\end{align}
be the set of such infeasible points. 
Similar to the binary case, we can define the implied activity for  $x_r=d  \in \left[0,u_r\right] \backslash \D$ as follows.}

\begin{definition}\label{def-impliedact}
	For linear constraint \eqref{linear}, the implied activity $w_r(d)$  with respect to $r \in \N \backslash \I$ and $d \in \left[0,u_r\right] \backslash \D$ is defined as:
	\begin{equation}\label{tmeq2}
		w_r(d) := \min_{x_r = d} \left\{\sum_{i \in \N} a_i x_i~\Bigm|~\eqref{bounds}\text{--}\eqref{intCons},~ \eqref{dp-imp-nonb1}\text{--}\eqref{dp-imp-nonb4} \right\}.
	\end{equation}
\end{definition}

\noindent
By definition, $w_r(d) \le b_0$ holds if and only if there exists a solution ${x}$ with ${x}_r = d$ satisfying \eqref{linear}--\eqref{intCons} and \eqref{dp-imp-nonb1}--\eqref{dp-imp-nonb4}.
Therefore, we can reformulate \eqref{nonb-prob1} and \eqref{nonb-prob2} as the following two problems:
\begin{subequations}
	\begin{align}
		\label{re-nonb-prob1}
		\LB &= \min\left\{{d \in \left[0, u_r\right] \backslash \D} ~\mid~ w_r(d) \le b_0 \right\}, \\
		\label{re-nonb-prob2}
		\UB &= \max\left\{{d \in \left[0, u_r\right] \backslash \D} ~\mid~ w_r(d) \le b_0 \right\}.
	\end{align}
\end{subequations}

To determine the lower and upper bounds \LB and \UB for variable $x_r$, we need to compute the minimum and maximum of {$d \in \left[0, u_r\right] \backslash \D$} such that $w_r(d) \le b_0$ holds.
We first discuss the computation of $w_r(d)$ for a given {$d \in \left[0, u_r\right] \backslash \D$}, which, as in the  binary case, can be done by analyzing implications \eqref{dp-imp-nonb1}--\eqref{dp-imp-nonb4}.
In fact, from $x_r = d$ and \eqref{dp-imp-nonb1}--\eqref{dp-imp-nonb4}, we obtain $x_i = 1$ for $i \in \B_1^<(r)$ with $d < \ell_{ir}$, 
$x_i = 0$ for $i \in \B_0^<(r)$ with $d < \ell_{ir}$,
$x_i = 1$ for $i \in \B_1^>(r)$ with $d > u_{ir}$, and
$x_i = 0$ for $i \in \B_0^>(r)$ with $d > u_{ir}$.
Note that from (iii) of \cref{remark1}, it must follow that $\ell_{ir} \le u_{ir}$ for all $i \in \B_1^<(r) \cap \B_1^>(r)$.
Therefore, one optimal solution $x^*$ for problem \eqref{tmeq2} is given as
\begin{subequations}
	\setlength{\columnsep}{0.5em}
	\begin{multicols}{2}
		\noindent
		\begin{align*}
			x^*_r &= d, \\
			x^*_i &= 1, ~\forall~i \in \tilde{\B}_1^< (r): = \{i \in \B_1^<(r) \mid d < \ell_{ir}\},
		\end{align*}
		\begin{align*}
			x^*_i &= 1, ~\forall~i \in \tilde{\B}_1^> (r):=\{i \in \B_1^>(r) \mid  d > u_{ir}\},\\
			x^*_i &= 0, ~\forall~i \in \N\backslash (\tilde{\B}_1^< (r) \cup \tilde{\B}_1^> (r) \cup \{r\}),
		\end{align*} 
	\end{multicols}
\end{subequations}\noindent
and the implied activity $w_r(d)$ can be computed as
\begin{equation}\label{wrd}
w_r(d) = \sum_{\substack{i \in \B^{<}_1(r),~ d < \ell_{ir}}} a_i  + \sum_{\substack{i \in \B^{>}_1(r),~ d > u_{ir}}} a_i  + a_r d,~ {\forall~ d \in [0, u_r] \backslash \D}.
\end{equation}

Unfortunately, in contrast to the case $r \in \B$, we cannot simply solve 
\eqref{re-nonb-prob1} and \eqref{re-nonb-prob2} by enumerating $d$, as there may be infinitely many choices for {$d \in \left[0, u_r\right] \backslash \D$}.
To bypass this difficulty, we first note that function \rev{$w_r(\cdot)$} is a piecewise linear function on {$\left[0, u_r\right] \backslash \D$}.
{To see this, let
\begin{equation}
	\{\ell_{ir}\}_{i \in \B_1^<(r)} \cup \{u_{ir}\}_{i \in \B_1^>(r)} := \left\{d_1, \ldots, d_s\right\},
\end{equation}
where $0  =: d_0 < d_1 < \cdots < d_s < d_{s+1} := u_r$ and $s = \left|\{\ell_{ir}\}_{i \in \B_1^<(r)} \cup \{u_{ir}\}_{i \in \B_1^>(r)}\right|$.
Then, the feasible domain can be expressed as $[0, u_r] \backslash \D = \cup_{t=0}^{s} \left([d_t, d_{t+1}] \backslash \D\right)$.
Note that from \eqref{infdefine}, $\D$ is the union of open intervals.
Therefore, for each $t \in \left\{ 0, \ldots, s \right\}$, the set $[d_t, d_{t+1}] \backslash \D$ can be decomposed into the union of (several) closed intervals, and the feasible region $[0, u_r] \backslash \D $ can be decomposed as follows:
\begin{equation}\label{d-define-1}
	[0, u_r] \backslash \D = \cup_{k=1}^{\tau} \left[\dunderbar_k, \bar{d}_k\right],
\end{equation} 
where $\tau \in \mathbb{Z}^+$ is the total number of  closed intervals, 
$\left[\dunderbar_k, \bar{d}_k\right]\subseteq [d_t, d_{t+1}]$ holds for some $t \in \left\{ 0, \ldots, s \right\}$,
and $\dunderbar_k,\bar{d}_{k} \in \{0,u_r\} \cup \left\{\ell_{ir} ~\mid~ i \in \B_1^<(r) \cup \B_0^<(r)\right\} \cup \left\{u_{ir} ~\mid~ i \in \B_1^>(r) \cup \B_0^>(r)\right\}$ satisfying
\begin{equation}\label{d-define-2}
	0 =: \dunderbar_1 < \dunderbar_2  < \cdots < \dunderbar_\tau ,~~\quad \dunderbar_k \leq \bar{d}_k \leq \dunderbar_{k+1}\rev{,}~\forall~k \in \{1,\ldots,\tau-1\},~\quad\bar{d}_\tau=u_r.
\end{equation}}%
{
Then, for $d', d'' \in (\dunderbar_k, \bar{d}_k)$, 
as $\left[\dunderbar_k, \bar{d}_k\right]\subseteq [d_t, d_{t+1}]$ holds for some $t \in \left\{ 0, \ldots, s \right\}$,
it must follow 
$\{ i\in \B_1^<(r)\mid d' < \ell_{ir} \}=\{ i\in \B_1^<(r)\mid d'' < \ell_{ir} \}$ and 
$\{ i\in \B_1^>(r)\mid d' > u_{ir} \}=\{ i\in \B_1^>(r)\mid d'' > u_{ir} \}$.}
Therefore, it follows from \eqref{wrd} that
\begin{lemma}\label{tmplemma1}
	For any {$k \in \{1,\dots,\tau\}$}, if $a_r=0$, then $w_r(\cdot)$ is a constant on {$(\dunderbar_k, \bar{d}_k)$};  
	otherwise, \rev{$w_r(\cdot)$} is a strictly increasing linear function over {$(\dunderbar_k, \bar{d}_k)$}.
\end{lemma}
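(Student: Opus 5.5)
The plan is to read the statement directly off the closed form \eqref{wrd}. That formula splits $w_r(d)$ into two parts: a sum of coefficients $a_i$ over two index sets that depend on $d$, and the term $a_r d$. The whole argument reduces to showing that, on an open interval $(\dunderbar_k,\bar d_k)$, both index sets stay the same.

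First, fix $k\in\{1,\dots,\tau\}$. By the decomposition \eqref{d-define-1}, there is some $t\in\{0,\ldots,s\}$ with $[\dunderbar_k,\bar d_k]\subseteq[d_t,d_{t+1}]$. Hence $(\dunderbar_k,\bar d_k)\subseteq(d_t,d_{t+1})$. This open interval contains none of the breakpoints $d_1,\ldots,d_s$, which by construction are exactly the values $\ell_{ir}$ for $i\in\B_1^<(r)$ and $u_{ir}$ for $i\in\B_1^>(r)$.

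Next, take $d',d''$ in $(\dunderbar_k,\bar d_k)$ and $i\in\B_1^<(r)$. Since $\ell_{ir}$ is not in $(d_t,d_{t+1})$, either $\ell_{ir}\le d_t<d',d''$ or $\ell_{ir}\ge d_{t+1}>d',d''$. Therefore the condition $d<\ell_{ir}$ has the same truth value at $d'$ and at $d''$. The same reasoning, applied to $u_{ir}$ for $i\in\B_1^>(r)$, shows that the condition $d>u_{ir}$ also has the same truth value at both points. This is exactly the set equality noted just before the lemma. As a result, the sum $c_k:=\sum_{i\in\B_1^<(r),\,d<\ell_{ir}}a_i+\sum_{i\in\B_1^>(r),\,d>u_{ir}}a_i$ does not depend on $d$ within this interval. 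We also need \eqref{wrd} to apply at every point of the interval, which holds because $(\dunderbar_k,\bar d_k)\subseteq[0,u_r]\backslash\D$.

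Combining these, $w_r(d)=c_k+a_r d$ for every $d\in(\dunderbar_k,\bar d_k)$. By \cref{assum2}, $a_r\ge 0$. If $a_r=0$, then $w_r(\cdot)$ equals the constant $c_k$ on the interval. Otherwise $a_r>0$, and $w_r(\cdot)$ is an affine function with positive slope, hence strictly increasing. If the interval is empty (the degenerate case $\dunderbar_k=\bar d_k$), the claim holds vacuously.

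No step here is a real obstacle. The only point that needs care is the observation that the open interval avoids all breakpoints, which relies on the endpoints $d_t,d_{t+1}$ themselves being breakpoints (or $0,u_r$). The strict inequalities in \eqref{wrd} then ensure that membership in both index sets stays fixed throughout the interval.
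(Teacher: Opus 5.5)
Your proof is correct and follows the paper's own argument. The paper notes that $[\dunderbar_k,\bar d_k]\subseteq[d_t,d_{t+1}]$ for some $t$, so the index sets $\{i\in\B_1^<(r)\mid d<\ell_{ir}\}$ and $\{i\in\B_1^>(r)\mid d>u_{ir}\}$ stay fixed on $(\dunderbar_k,\bar d_k)$; then \eqref{wrd} gives $w_r(d)=c_k+a_r d$ there, and $a_r\ge 0$ settles both cases — you simply spell out in more detail why no breakpoint lies in the open interval, why that interval avoids $\D$, and the degenerate empty case.
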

In online Appendix \ref{appendixC}, we provide an example to illustrate {how the feasible domain can be decomposed into the form of \eqref{d-define-1},
and to demonstrate} that although the function \rev{$w_r(\cdot)$} is piecewise linear, it is {not monotonic on the feasible domain $[0, u_r] \backslash \D$.}

 

To solve the two optimization problems \eqref{re-nonb-prob1} and \eqref{re-nonb-prob2}, 
we can divide them into {$2\tau$} problems:
\begin{subequations}
	\begin{align}
		\LB_k &:= \min\left\{d \in {[\dunderbar_k, \bar{d}_k]} ~\mid~ w_r(d) \le b_0\right\},~\forall~k \in {\{1,\dots, \tau\}}, \label{lbk} \\
		\UB_k &:= \max\left\{d \in {[\dunderbar_k, \bar{d}_k]} ~\mid~ w_r(d) \le b_0\right\},~\forall~k \in {\{1,\dots, \tau\}}, \label{ubk}
	\end{align}
\end{subequations}
and compute the optimums \LB and \UB as:
\begin{equation}\label{minlbk}
	\LB = \min_{k \in {\{1,\ldots, \tau\}}}~ \LB_k,~ \UB = \max_{k \in {\{1,\ldots, \tau\}}}~ \UB_k.
\end{equation}
{Note that for $k \in \left\{1, \ldots,\tau\right\}$ with $\dunderbar_k = \bar{d}_k$, if $w_r(\dunderbar_k) \le b_0$, then
$\LB_k = \UB_k = \dunderbar_k$; otherwise  $\LB_k = +\infty$ and $\UB_k = -\infty$.
Therefore, to compute $\LB_k$ and $\UB_{k}$, it suffices to consider the case with $\dunderbar_k < \bar{d}_k$.
To do this, we need the notation of the left limit of $w_r(d)$ at the right endpoint $\rev{d=}$ $\bar{d}_k$, and the right limit of $w_r(d)$ at the left endpoint $\rev{d=}$ $\dunderbar_k$.}
{\begin{definition}\label{limit}
	For $k \in \{1,\ldots,\tau\}$ with $\dunderbar_{k} < \bar{d}_k$,
	the left limit of $w_r(d)$ at $d = \bar{d}_k$ 
	and the right limit of $w_r(d)$ at $d = \dunderbar_k$ 
	are defined as
	\begin{align}
		w_r^-(\bar{d}_k) := \lim_{\varepsilon \to 0^+} w_r(\bar{d}_k - \varepsilon) &= w_r(\bar{d}_k) + \sum_{i \in \B_1^<(r),~ \bar{d}_k = \ell_{ir}} a_i,\\
	   w_r^+(\dunderbar_k) := \lim_{\varepsilon \to 0^+} w_r(\dunderbar_k + \varepsilon) &= w_r(\dunderbar_k) + \sum_{i \in \B_1^>(r),~ \dunderbar_k = u_{ir}} a_i. \label{def-w+}
   \end{align}
\end{definition}}
\noindent {By definition, it follows 
\begin{lemma}\label{tmplemma2}
	For $k \in \{1,\ldots,\tau\}$ with $\dunderbar_{k} < \bar{d}_k$, $w_r^-(\bar{d}_k) \geq w_{r}(\bar{d}_k)$ and $w_r^+(\dunderbar_k) \geq w_{r}(\dunderbar_k)$ hold.
\end{lemma}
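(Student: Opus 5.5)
The plan is to read both inequalities directly off the closed-form expressions in \cref{limit}, using \cref{assum2}. Since the statement asserts $w_r^-(\bar{d}_k) = w_r(\bar{d}_k) + \sum_{i \in \B_1^<(r),~\bar{d}_k=\ell_{ir}} a_i$ and $a_i \ge 0$ for all $i$, the added sum is nonnegative, which gives the first claim. The same argument applied to \eqref{def-w+} gives $w_r^+(\dunderbar_k) \ge w_r(\dunderbar_k)$. The real content therefore lies in justifying the two limit formulas, so I would verify them from \eqref{wrd}.

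For the left limit, I would take $\varepsilon>0$ small enough that $\bar{d}_k-\varepsilon \in (\dunderbar_k,\bar{d}_k)$ and that no breakpoint $\ell_{ir}$ ($i\in\B_1^<(r)$) or $u_{ir}$ ($i\in\B_1^>(r)$) lies in $[\bar{d}_k-\varepsilon,\bar{d}_k)$. This is possible because there are finitely many breakpoints. I would then compare each term of \eqref{wrd} at $d=\bar{d}_k-\varepsilon$ with the same term at $d=\bar{d}_k$.
\begin{itemize}
\item The linear term gives $a_r(\bar{d}_k-\varepsilon)\to a_r\bar{d}_k$.
\item For $i\in\B_1^<(r)$, the condition $\bar{d}_k-\varepsilon<\ell_{ir}$ is equivalent to $\bar{d}_k\le \ell_{ir}$. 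This adds exactly the indices with $\ell_{ir}=\bar{d}_k$ relative to the condition $\bar{d}_k<\ell_{ir}$.
\item For $i\in\B_1^>(r)$, the condition $\bar{d}_k-\varepsilon>u_{ir}$ is equivalent to $\bar{d}_k>u_{ir}$, because no $u_{ir}$ lies in $[\bar{d}_k-\varepsilon,\bar{d}_k)$. So this sum is unchanged.
\end{itemize}
This yields the stated formula. The right limit at $\dunderbar_k$ is symmetric. There the $\B_1^<$ set is unchanged, since $\dunderbar_k+\varepsilon<\ell_{ir}$ is equivalent to $\dunderbar_k<\ell_{ir}$. The $\B_1^>$ set gains exactly the indices with $u_{ir}=\dunderbar_k$.

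The only delicate point is that $\bar{d}_k$ and $\dunderbar_k$ belong to $[0,u_r]\backslash\D$, so that $w_r$ is defined there via \eqref{wrd}. This holds because $\D$ is a union of open intervals, which makes the decomposition \eqref{d-define-1} consist of closed intervals. I expect no real obstacle beyond this bookkeeping; the nonnegativity from \cref{assum2} finishes the proof.
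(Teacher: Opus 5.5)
Your proposal is correct and matches the paper, which simply notes that the lemma follows ``by definition'': the closed-form expressions in \cref{limit} add a sum of coefficients $a_i$, and each $a_i \ge 0$ by \cref{assum2}. Your extra check of the two limit formulas from \eqref{wrd}, choosing $\varepsilon$ so that no breakpoint lies in the half-open gap, is correct and fills in bookkeeping the paper leaves implicit.
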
}
Using \cref{tmplemma1,tmplemma2}, we can provide closed formulas for $\LB_k$ and $\UB_{k}$.
\begin{proposition}\label{non-01-prop}
	Let $k \in \{1,\dots, \tau\}$, and $\LB_k$ and $\UB_k$ be defined in \eqref{lbk}--\eqref{ubk}.
	Then,
	\begin{itemize}
		\item[(i)] If $w_r(\dunderbar_k) \le b_0$ and $w_r(\bar{d}_k) \le b_0$, then $\LB_k = \dunderbar_k$ and $\UB_k = \bar{d}_k.$
		\item[(ii)] If $w_r(\dunderbar_k) \le b_0$, $w_r(\bar{d}_k) > b_0$, and $w_r^+(\dunderbar_k) > b_0$, then $\LB_k = \UB_k = \dunderbar_k.$
		\item[(iii)] If $w_r(\dunderbar_k) \le b_0$, $w_r(\bar{d}_k) > b_0$, and $w_r^+(\dunderbar_k) \le b_0$, then $a_r > 0$,  $\LB_k = \dunderbar_k$, and $\UB_k = \dunderbar_k + \frac{b_0 - w_r^+(\dunderbar_k)}{a_r}.$
		\item[(iv)] If $w_r(\dunderbar_k) > b_0$ and  $w_r(\bar{d}_k) \le b_0$, then $\LB_k = \UB_k = \bar{d}_k.$
		\item[(v)] If $w_r(\dunderbar_k) > b_0$ and $w_r(\bar{d}_k) > b_0$, then $\LB_k=+\infty$ and $\UB_k=-\infty$.
	\end{itemize}
\end{proposition}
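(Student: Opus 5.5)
The plan is to describe $w_r(\cdot)$ precisely on the closed interval $[\dunderbar_k,\bar{d}_k]$ with $\dunderbar_k<\bar{d}_k$; the case $\dunderbar_k=\bar{d}_k$ was already settled before the statement. By \cref{tmplemma1}, on the open interval $(\dunderbar_k,\bar{d}_k)$ we have $w_r(d)=w_r^+(\dunderbar_k)+a_r(d-\dunderbar_k)$. The constant is identified through \cref{limit}, since the open-interval formula extends continuously to the left endpoint with value $w_r^+(\dunderbar_k)$. Likewise its value as $d\to\bar{d}_k^-$ is $w_r^-(\bar{d}_k)$, so $w_r^-(\bar{d}_k)=w_r^+(\dunderbar_k)+a_r(\bar{d}_k-\dunderbar_k)$. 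The endpoint values $w_r(\dunderbar_k)$ and $w_r(\bar{d}_k)$ may differ from these limits, and \cref{tmplemma2} says they can only be smaller. This gives a complete picture: an affine nondecreasing piece on the interior, plus two endpoints that can only drop below it.

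Each case then follows from this picture.

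\textbf{Case (i).} Both endpoints are feasible, and $\LB_k\ge\dunderbar_k$, $\UB_k\le\bar{d}_k$ trivially, so the bounds are attained.

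\textbf{Case (v).} By \cref{tmplemma2}, $w_r^+(\dunderbar_k)\ge w_r(\dunderbar_k)>b_0$. Since $a_r\ge 0$ by \cref{assum2}, every interior $d$ has $w_r(d)\ge w_r^+(\dunderbar_k)>b_0$. Both endpoints are also infeasible, so the feasible set is empty.

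\textbf{Case (iv).} The same interior argument makes every interior point infeasible. The left endpoint is infeasible by hypothesis and the right endpoint is feasible, so $\LB_k=\UB_k=\bar{d}_k$.

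\textbf{Case (ii).} The interior is infeasible because $w_r^+(\dunderbar_k)>b_0$ and the interior piece is nondecreasing. The right endpoint is infeasible by hypothesis, and $\dunderbar_k$ is feasible, so only $\dunderbar_k$ remains.

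\textbf{Case (iii).} This is the main step. First show $a_r>0$. If $a_r=0$, then $w_r$ is constant equal to $w_r^+(\dunderbar_k)\le b_0$ on the interior, so $w_r^-(\bar{d}_k)\le b_0$. By \cref{tmplemma2}, $w_r(\bar{d}_k)\le w_r^-(\bar{d}_k)\le b_0$, contradicting the hypothesis. So $a_r>0$.

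Then $\LB_k=\dunderbar_k$, since that endpoint is feasible. The right endpoint is infeasible, so the maximum over the feasible set is taken over $[\dunderbar_k,\bar{d}_k)$, where the feasible part is $\{\dunderbar_k\}\cup\{d\in(\dunderbar_k,\bar{d}_k): w_r^+(\dunderbar_k)+a_r(d-\dunderbar_k)\le b_0\}$. Solving the inequality gives the threshold $d^\ast=\dunderbar_k+\frac{b_0-w_r^+(\dunderbar_k)}{a_r}\ge\dunderbar_k$.

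It remains to check $d^\ast<\bar{d}_k$, so that the threshold is attained inside the interval; this is the delicate point. Suppose instead $d^\ast\ge\bar{d}_k$. Then $w_r^-(\bar{d}_k)=w_r^+(\dunderbar_k)+a_r(\bar{d}_k-\dunderbar_k)\le b_0$, and by \cref{tmplemma2}, $w_r(\bar{d}_k)\le b_0$, which is a contradiction. Hence $\UB_k=d^\ast$.

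If $d^\ast=\dunderbar_k$, the interior contributes nothing new and the formula still holds. The only subtlety throughout is correctly using the left/right limit formulas of \cref{limit} to tie the interior affine piece to the endpoint values. I expect the existence argument for $a_r>0$ and $d^\ast<\bar{d}_k$ in case (iii) to be the main obstacle.
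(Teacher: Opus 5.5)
Your proof is correct and takes the route the paper indicates: \cref{tmplemma1}, together with \eqref{wrd}, gives the affine piece $w_r^+(\dunderbar_k)+a_r(d-\dunderbar_k)$ on the open interval, \cref{tmplemma2} shows the endpoint values can only lie below the one-sided limits, and the five cases follow from the case analysis you give. The paper states the proposition with only the remark that it follows from these two lemmas, so your argument simply fills in the omitted details, including the derivation of $a_r>0$ and the check $\dunderbar_k \le d^\ast < \bar{d}_k$ in case (iii).
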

In \cref{example3} of \cref{appendixC}, we provide an illustrative example for applying \cref{non-01-prop} to compute $\{\LB_k\}_{k=1, \ldots, \tau}$ and $\{\UB_k\}_{k=1, \ldots, \tau}$.


Note that for each fixed {$k \in \{1,\dots, \tau\}$}, the computation of $\LB_k$ and $\UB_k$ in \cref{non-01-prop} can be accomplished by computing {$w_r(\dunderbar_k)$, $w_r^+(\dunderbar_k)$, and $w_r(\bar{d}_{k})$ with the 
complexity \rev{of} $\CO(|\B_1^> (r)| + |\B_1^<(r)|)$.}
Therefore, a direct computation of \LB and \UB from \eqref{minlbk} requires
$\CO(\tau (|\B_1^>(r)| + |\B_1^<(r)|))
=\CO\bigl((|\B_1^<(r)| + |\B_0^<(r)| + |\B_1^>(r)| + |\B_0^>(r)|)\allowbreak
(|\B_1^>(r)| + |\B_1^<(r)|)\bigr)$ time.
However, this complexity can be improved to $\CO(|\B_1^<(r)| + |\B_0^<(r)| + |\B_1^>(r)| + |\B_0^>(r)|)$.
To see this, for $k \in \{1,\ldots,\tau-1\}$, we can update $w_r(\dunderbar_{k+1})$ from $w_r(\dunderbar_k)$ as
\begin{equation}
	\label{w-update-1}
	\begin{aligned}
		w_r(\dunderbar_{k+1}) &= \sum_{i \in \B_1^<(r),~ \dunderbar_{k+1} < \ell_{ir}} a_i + \sum_{i \in \B_1^>(r),~ \dunderbar_{k+1} > u_{ir}} a_i + a_r \dunderbar_{k+1} \\ 
		&= w_r(\dunderbar_k) - \sum_{i \in \B_1^<(r),~ \dunderbar_k < \ell_{ir} \le \dunderbar_{k+1}} a_i + \sum_{i \in \B_1^>(r),~ \dunderbar_k \le u_{ir} < \dunderbar_{k+1}} a_i + a_r (\dunderbar_{k+1} - \dunderbar_k).
	\end{aligned}
\end{equation}
Using the fact that the computation of $\left\{\sum\limits_{i \in \B_1^<(r),~ \dunderbar_k < \ell_{ir} \le \dunderbar_{k+1}} a_i\right\}_{k \in \{1,\dots, \tau-1\}}$ and $\left\{ \sum\limits_{i \in \B_1^>(r),~ \dunderbar_k \le u_{ir} < \dunderbar_{k+1}} a_i\right\}_{k \in \{1,\dots, \tau-1\}}$ can be achieved with the complexity of $\CO(|\B_1^<(r)| + |\B_0^<(r)| + |\B_1^>(r)| + |\B_0^>(r)|)$, we can compute $w_r(\dunderbar_k)$ for all $k \in \{1, \ldots, \tau\}$ in the complexity \rev{of} $\CO(|\B_1^<(r)| + |\B_0^<(r)| + |\B_1^>(r)| + |\B_0^>(r)|)$.
Using a similar manner, we can also compute $w_r(\bar{d}_k)$ for all $k \in \{1, \ldots, \tau\}$ in \rev{the} complexity of $\CO(|\B_1^<(r)| + |\B_0^<(r)| + |\B_1^>(r)| + |\B_0^>(r)|)$.
Finally, using the fact that the computation of $\left\{ \sum\limits_{i \in \B_1^>(r),~ \dunderbar_k = u_{ir}} a_i\right\}_{k \in \{1,\dots, \tau\}}$ can be achieved with the complexity of $\CO(|\B_1^<(r)| + |\B_0^<(r)| + |\B_1^>(r)| + |\B_0^>(r)|)$, 
we can also compute $w_r^+(\dunderbar_{k})$ as in \eqref{def-w+} for all $k \in \{1, \ldots, \tau\}$ in the complexity of $\CO(|\B_1^<(r)| + |\B_0^<(r)| + |\B_1^>(r)| + |\B_0^>(r)|)$.

The extension to the case $r\in \I\backslash \B$ (i.e., $x_r$ is an integer variable) is easy.
Indeed, when $r \in \I\backslash \B$, 
similar to \eqref{re-nonb-prob1}--\eqref{re-nonb-prob2}, problems \eqref{nonb-prob1}--\eqref{nonb-prob2}   can be written as
\begin{subequations}
	\begin{align}
		\label{re-nonb-prob1-int}
		\LB &= \min\left\{d \in {\left(\left[0, u_r\right] \backslash \D\right)} \cap \mathbb{Z} ~\mid~ w_r(d) \le b_0 \right\}, \\
		\label{re-nonb-prob2-int}
		\UB &= \max\left\{d \in {\left(\left[0, u_r\right] \backslash \D\right)} \cap \mathbb{Z} ~\mid~ w_r(d) \le b_0 \right\},
	\end{align}
\end{subequations}
where the implied activity $w_r(d)$ {is now defined on $d \in \left(\left[0, u_r\right] \backslash \D\right) \cap \mathbb{Z}$ as
\begin{equation}
	w_r(d)= \sum_{\substack{i \in \B^{<}_1(r),~ d < \ell_{ir}}} a_i  + \sum_{\substack{i \in \B^{>}_1(r),~ d > u_{ir}}} a_i  + a_r d,~ \forall~ d \in \left(\left[0, u_r\right] \backslash \D\right) \cap \mathbb{Z}.
\end{equation}}%
We can compute $\LB$ and $\UB$ using \eqref{minlbk}, where $\LB_k$ and $\UB_k$ for {$k = 1, \ldots, \tau$} can be adapted as  
\begin{subequations}
	\begin{align}
		\LB_k &:= \min\left\{d \in {[\dunderbar_k, \bar{d}_k]} \cap \mathbb{Z} ~\mid~ w_r(d) \le b_0\right\}, \label{lbk-I} \\
		\UB_k &:= \max\left\{d \in {[\dunderbar_k, \bar{d}_k]} \cap \mathbb{Z} ~\mid~ w_r(d) \le b_0\right\}, \label{ubk-I}
	\end{align}
\end{subequations}
and $\{\dunderbar_k, \bar{d}_k\}_{k = 1,\dots, \tau}$ are the same as the continuous case in \eqref{d-define-1}--\eqref{d-define-2}.
Note that \cref{non-01-prop} can be directly extended to compute the $\LB_k$ and $\UB_k$ in \eqref{lbk-I}--\eqref{ubk-I}.
In particular, from \cref{assum1}(ii), and $\dunderbar_k,\bar{d}_{k} \in \{0,u_r\} \cup \left\{\ell_{ir} ~\mid~ i \in \B_1^<(r) \cup \B_0^<(r)\right\} \cup \left\{u_{ir} ~\mid~ i \in \B_1^>(r) \cup \B_0^>(r)\right\}$, \rev{$\{\dunderbar_k, \bar{d}_k\}_{k = 1,\dots, \tau}$ are integral}, and therefore cases (i)--(ii) and (iv)--(v) of \cref{non-01-prop} still hold for the computation of $\LB_k$ and $\UB_k$  in \eqref{lbk-I}--\eqref{ubk-I}.
For case (iii), the lower bound is still {$\LB_k = \dunderbar_k$}, and as $r \in \I \backslash \B$, the upper bound $\UB_k$ can be adapted as {$\UB_k = \dunderbar_k +  \lfloor \frac{b_0 - w_r^+(\dunderbar_k)}{a_r} \rfloor$}.

\begin{remark}\label{remark3}
	 Similar to the binary case in \cref{remark2}, if the bounds of $x_r$ are tightened, we may additionally derive tighter bounds  for other variables using \VIs in \eqref{dp-imp-nonb1}--\eqref{dp-imp-nonb4}.
	Specifically, if $x_r \ge \LB$, then from \eqref{dp-imp-nonb3} and \eqref{dp-imp-nonb4}, we obtain $x_i = 1$ for $i \in \B_1^>(r)$ with $\LB > u_{ir}$ and $x_i = 0$ for $i \in \B_0^>(r)$ with $\LB > u_{ir}$;
	if $x_r \le \UB$, then from \eqref{dp-imp-nonb1} and \eqref{dp-imp-nonb2}, we obtain $x_i = 1$ for $i \in \B_1^<(r)$ with $\UB < \ell_{ir}$ and $x_i = 0$ for $i \in \B_0^<(r)$ with $\UB < \ell_{ir}$. 
\end{remark}

\subsection{Comparison with the presolve technique in \citet{Achterberg2013a}}\label{compare}

\citet{Achterberg2013a} proposed a presolve technique  to tighten variable bounds, achieved by applying probing on the binary variables of the cover implied by a knapsack constraint and observing that at least one of them must be set to zero.
The computational results of \citet{Achterberg2013a} showed that this technique could affect a substantial number of \MIP problems \rev{in their testbed} and improve the performance of the \MIP solver CPLEX.
Similar ideas have also been explored in the stochastic programming community, see e.g., \cite{Lejeune2010,Luedtke2010},
and recently, this technique was also implemented in the SAS optimization solvers \citep{Pratt2025}.
In this subsection, we will compare the proposed \VI-aware \LCP with the presolve technique in \cite{Achterberg2013a}.
In particular, our analysis shows that our proposed approach could derive tighter bounds than those derived by the presolve technique in \cite{Achterberg2013a}.
{Throughout this subsection, for simplicity, we only discuss the tightening of bounds for continuous variables; the discussions for binary and general integer variables are similar.}

We first review the presolve technique in \cite{Achterberg2013a}. 
Consider 
a knapsack constraint of the form:
\begin{align}
	& a_{\B}^\top x_{\B} \leq b_0, \label{knap}
\end{align}
where $a_i \geq 0$ for all $i \in \B$.
A set $\V \subseteq \B$ is a cover implied by \eqref{knap} if $\sum_{i \in \V} a_i > b_0$. 
Using the cover information and the \VIs in \eqref{dp-imp-nonb1}, we may tighten the lower bound of variable $x_r$. 
Specifically, for variable $x_r$ with $r \in \N\backslash \I$, if $\V \subseteq \B_1^<(r)$, then $x_i =0 \rightarrow  x_r \geq \ell_{ir}$ (implied by \eqref{dp-imp-nonb1}) holds for all $i \in \V$.
Since at least one variable in cover $\V$ should be zero (implied by the knapsack constraint \eqref{knap}), we can tighten the lower bound of $x_r$ as $x_r \geq \min_{i \in \V} \ell_{ir}$.
Note that different covers may yield different lower bounds of variable $x_r$.
To obtain the tightest lower bound $\LBhat$ for variable $x_r$, \citet{Achterberg2013a} proposed to find the ``best'' cover $\V$ by adding indices $i \in \B_1^<(r)$ in a non-increasing order of $\ell_{ir}$ into $\V$ until it defines a cover of the knapsack constraint \eqref{knap}.
This is equivalent  to solving the following optimization problem:
\begin{align}\label{2013bound-1}
	\LBhat = \max \left\{\ell_{i_0r},~ i_0 \in \B_1^<(r) ~\Bigm|~ \sum_{i \in \B_1^<(r),~ \ell_{i_0r} \le \ell_{ir}} a_i > b_0\right\}.
\end{align}


\noindent Similarly, using the cover information and the \VIs in \eqref{dp-imp-nonb3}, we may tighten the upper bound of variable $x_r$. 
Specifically, if $\V \subseteq \B_1^>(r)$, then $x_i =0 \rightarrow  x_r \leq u_{ir}$ (implied by \eqref{dp-imp-nonb3}) holds for all $i \in \V$, and we can tighten the upper bound of $x_r$ as $x_r \le \max_{i \in \V} u_{ir}$.
To obtain the tightest upper bound $\UBhat$ for variable $x_r$, we can find the ``best'' cover $\V$ by adding indices $i \in \B_1^>(r)$ in a non-decreasing order of $u_{ir}$ into $\V$ until it defines a cover of the knapsack constraint \eqref{knap}.
This is equivalent to solving the following optimization problem:
\begin{align}\label{2013bound-2}
	\UBhat = \min \left\{u_{i_0r},~ i_0 \in \B_1^>(r) ~\Bigm|~ \sum_{i \in \B_1^>(r),~ u_{i_0r} \ge u_{ir}} a_i > b_0\right\}.
\end{align}

Next, we compare the proposed \VI-aware \LCP in \cref{subsect:boundtightening-theory} with the presolve technique of \citet{Achterberg2013a} on knapsack constraints.
We first note that from \eqref{re-nonb-prob1}--\eqref{re-nonb-prob2} and \eqref{wrd}, the lower bound $\LB$ and upper bound $\UB$  for variable $x_r,~ r \in \N \backslash \I$, detected by the proposed \VI-aware \LCP, can be written as
\begin{subequations}
	\begin{align}
		\label{prob1}
		\LB = \min \left\{{d \in \left[0, u_r\right] \backslash \D} ~\Bigm|~ \sum_{\substack{i \in \B^{<}_1(r),~ d < \ell_{ir}}} a_i  + \sum_{\substack{i \in \B^{>}_1(r),~ d > u_{ir}}} a_i \le b_0\right\}, \\
		\label{prob2}
		\UB = \max \left\{{d \in \left[0, u_r\right] \backslash \D} ~\Bigm|~ \sum_{\substack{i \in \B^{<}_1(r),~ d < \ell_{ir}}} a_i  + \sum_{\substack{i \in \B^{>}_1(r),~ d > u_{ir}}} a_i \le b_0\right\}.
	\end{align}
\end{subequations}
Comparing \eqref{2013bound-1}--\eqref{2013bound-2} and \eqref{prob1}--\eqref{prob2}, 
we observe that the presolve technique in \cite{Achterberg2013a} uses only \VIs \rev{~in} \eqref{dp-imp-nonb1} to derive the lower bound $\LBhat$, and only \VIs \rev{~in} \eqref{dp-imp-nonb3} to compute the upper bound $\UBhat$, while the proposed \VI-aware \LCP simultaneously uses both the \VIs \rev{~in} \eqref{dp-imp-nonb1} and \eqref{dp-imp-nonb3} to derive the lower and upper bounds \LB and \UB for variable $x_r$, and additionally excludes the infeasible domain $\D$ using \VIs \rev{~in} \eqref{dp-imp-nonb1}--\eqref{dp-imp-nonb4}.
The following theorem further demonstrates this advantage.

\begin{theorem}\label{2013theorem}
	Let $\LBhat$ and $\UBhat$ be defined in \eqref{2013bound-1}--\eqref{2013bound-2}, and $\LB$ and $\UB$ be defined in \eqref{prob1}--\eqref{prob2}.
	Then, it must follow $\LB \ge \LBhat$ and $\UB \le \UBhat$.
\end{theorem}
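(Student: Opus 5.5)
We argue directly from the feasibility conditions. Write $F := \{d \in [0,u_r]\backslash \D \mid w_r(d) \le b_0\}$ with $w_r(d)$ as in \eqref{wrd}. In the setting of \eqref{prob1}--\eqref{prob2} the linear constraint is the knapsack \eqref{knap}, so $a_r = 0$. Then $\LB = \min F$ and $\UB = \max F$. To prove $\LB \ge \LBhat$ it suffices to show that every $d < \LBhat$ violates $w_r(d) \le b_0$, so that $F \cap [0,\LBhat) = \varnothing$. Symmetrically, to prove $\UB \le \UBhat$ it suffices to show that every $d > \UBhat$ violates the inequality. If $F = \varnothing$, we adopt the conventions $\LB = +\infty$ and $\UB = -\infty$, and both inequalities hold trivially. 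Likewise, if the set in \eqref{2013bound-1} or \eqref{2013bound-2} is empty, we set $\LBhat = -\infty$ or $\UBhat = +\infty$, and there is nothing to prove.

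For the lower bound, let $i_0 \in \B_1^<(r)$ attain the maximum in \eqref{2013bound-1}, so $\ell_{i_0 r} = \LBhat$ and $\sum_{i \in \B_1^<(r),\, \ell_{ir} \ge \LBhat} a_i > b_0$. Take any $d \in [0,u_r]\backslash\D$ with $d < \LBhat$. Every $i$ with $\ell_{ir} \ge \LBhat$ satisfies $d < \ell_{ir}$, so $\{i \in \B_1^< (r)\mid \ell_{ir} \ge \LBhat\} \subseteq \{i \in \B_1^<(r) \mid d < \ell_{ir}\}$. Since $a_i \ge 0$ for all $i$ by \cref{assum2}, dropping the second sum in \eqref{wrd} only decreases it. Hence
\[
w_r(d) \ge \sum_{i \in \B_1^<(r),\, d<\ell_{ir}} a_i \ge \sum_{i \in \B_1^<(r),\, \ell_{ir}\ge \LBhat} a_i > b_0 .
\]
Therefore $d \notin F$, and so $\LB \ge \LBhat$.

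The upper bound is symmetric. Let $i_0 \in \B_1^>(r)$ attain the minimum in \eqref{2013bound-2}, so $u_{i_0 r} = \UBhat$ and $\sum_{i\in\B_1^>(r),\, u_{ir}\le \UBhat} a_i > b_0$. For any feasible $d > \UBhat$, all such $i$ satisfy $d > u_{ir}$. Discarding the first sum in \eqref{wrd}, again by nonnegativity, gives $w_r(d) > b_0$. Hence $\UB \le \UBhat$.

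The only step needing care is the bookkeeping: we must check that $w_r$ in \eqref{wrd} is exactly the left-hand side of \eqref{prob1}--\eqref{prob2}, which holds since $a_r = 0$ for a knapsack. We must also check that excluding $\D$ only shrinks $F$, and so can only increase $\LB$ and decrease $\UB$. Neither step is a genuine obstacle. The substantive content is simply the monotone inclusion of index sets combined with $a \ge 0$.
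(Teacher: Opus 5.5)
Your proof is correct and matches the paper's argument. For any $d \in [0,\LBhat)\setminus\D$ (resp.\ $d \in (\UBhat,u_r]\setminus\D$), you drop the nonnegative second (resp.\ first) sum and use the inclusion of index sets to bound the activity below by the cover sum in \eqref{2013bound-1} (resp.\ \eqref{2013bound-2}), which exceeds $b_0$; the only difference is that you handle degenerate cases with $\pm\infty$ conventions where the paper treats $\LBhat = 0$ and $\UBhat = u_r$ as trivial.
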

\begin{proof}{Proof}
	\raggedright
	We first prove $\LB \ge \LBhat$.
	If $\LBhat = 0$, $\LB \ge \LBhat$ holds trivially;
	otherwise, $\LBhat > 0$.
	For any {$d \in \left[0, \LBhat\right) \backslash \D$}, it follows 
	\begin{equation}\label{lbproof}
		\sum_{\substack{i \in \B^{<}_1(r),~ d < \ell_{ir}}} a_i  + \sum_{\substack{i \in \B^{>}_1(r),~ d > u_{ir}}} a_i \geq \sum_{\substack{i \in \B^{<}_1(r),~ d < \ell_{ir}}} a_i
		\ge \sum_{i \in \B_1^< (r) ,~\LBhat \le \ell_{ir}} a_i \stackrel{(a)}{>} b_0,
	\end{equation}
	where (a) follows from the definition of $\LBhat$ in \eqref{2013bound-1}.
	Thus, {$d \in \left[0, \LBhat\right) \backslash \D$} cannot be a feasible solution of \eqref{prob1}, 
	implying $\LB \ge \LBhat$.
	Similarly, 	if $\UBhat = u_r$, $\UB \le \UBhat$ holds trivially;
	otherwise,  $\UBhat < u_r$.
	Then, for any {$d \in \left(\UBhat, u_r\right] \backslash \D$}, it follows
	\begin{equation}\label{ubproof}
		\sum_{\substack{i \in \B^{<}_1(r),~ d < \ell_{ir}}} a_i  + \sum_{\substack{i \in \B^{>}_1(r),~ d > u_{ir}}} a_i \ge \sum_{\substack{i \in \B^{>}_1(r),~ d > u_{ir}}} a_i \ge \sum_{i \in \B_1^> (r),~ \UBhat \ge u_{ir}} a_i \stackrel{(b)}{>} b_0,
	\end{equation}
	where (b) follows from the definition of $\UBhat$ in \eqref{2013bound-2}.
	Thus, {$d \in \left(\UBhat, u_r\right] \backslash \D$} cannot be a feasible solution of \eqref{prob2}, 
	implying $\UB \le \UBhat$.
\end{proof}


\cref{2013theorem} shows that when applied to knapsack constraints, the proposed \VI-aware \LCP can derive variable bounds that are at least as tight as those derived from the approach of \cite{Achterberg2013a}.
In \cref{tmpex} given in online Appendix \ref{appendixA}, we further show that our approach could derive strictly tighter variable bounds.

Finally, for an arbitrary linear constraint that involves non-binary variables with non-zero coefficients, the presolve technique of \citet{Achterberg2013a} first \rev{relaxes} non-binary variables to obtain a knapsack constraint using variable bounds \eqref{bounds} or \rev{\VIs in} \eqref{impc1}--\eqref{impc4}, and then \rev{derives} tighter variable bounds $\LBhat$ and $\UBhat$ based on the \rev{resulting} knapsack constraint. 
In contrast, our proposed \VI-aware \LCP can be directly applied to an arbitrary linear constraint (and does not require relaxing non-binary variables from the constraint), thereby retaining more information for deriving variable bounds.
In \cref{tmp-eex} given in online Appendix \ref{appendixA}, we show that this could enable us to derive strictly tighter variable bounds.



\subsection{Algorithm framework}\label{subsect:algorithm}

For a linear constraint \eqref{linear}, we can apply the \VI-aware \LCP  to tighten the lower and upper bounds for each variable independently.
However, this could be time-consuming.
Here, we extend the approach of \citet{Achterberg2013a}, that tightens all variable bounds simultaneously based on knapsack constraints, onto our setting. 
The details are summarized in \cref{alg:boundtightening-binary} in online Appendix \ref{appendixB1}.

In steps \ref{line-bin-init}--\ref{line-bin-fix-end} of \cref{alg:boundtightening-binary}, we first  tighten the lower and upper bounds of binary variables.
Specifically,  in step \ref{line-bin-init}, we initialize $w_r(0) := 0$ and $w_r(1) := a_r$ for all $r \in \B$.
Here, we mark that $a_r$ is included in the computation of $w_r(1)$ to avoid overcounting the coefficients.
In steps \ref{line-nonbin-loop-start}--\ref{line-nonbin-loop-end}, we iterate over all \VIs that contain a non-binary variable $x_j$ with $a_j > 0$ in the \VI graph $\G$, to account for the contribution of non-binary variables.
In steps \ref{line-bin-loop-start}--\ref{line-bin-loop-end}, we iterate over all cliques that contain a binary variable $x_i$ with $a_i > 0$, to account for the contribution of binary variables.
In steps \ref{line-bin-fix-start}--\ref{line-bin-fix-end}, we attempt to fix binary variables using \cref{0-1prop}.
Subsequently, in steps \ref{line-nbin-init}--\ref{line-nbin-end} of \cref{alg:boundtightening-binary}, we attempt to tighten the lower and upper bounds of non-binary variables.
In particular, in steps \ref{line-bin-imp-1-start}--\ref{line-bin-imp-2-end}, we compute the implied activities $w_r(0)$ and $w_r(u_r)$ for all $r \in \N \backslash \B$.
In steps \ref{line-nonb-lbub-start}--\ref{line-nonb-lbub-end}, \rev{we solve problems \eqref{lbk}--\eqref{ubk} using \cref{non-01-prop}, and compute the lower and upper bounds for all non-binary variables using \eqref{minlbk}.}%

	In computational experience, we observed that querying the clique table in steps \ref{line-bin-loop-start}--\ref{line-bin-loop-end} of  \cref{alg:boundtightening-binary} could be time-consuming,  especially for problems with many large cliques.
	To further improve the performance of \cref{alg:boundtightening-binary}, we present two techniques extended from \citet{Achterberg2013a}, to avoid unnecessary computations.
	The details of these two techniques are discussed in online Appendix \ref{appendixB2}.

\subsection{Algorithm enhancements}
\label{extent}

In this section, we derive two enhancements of our \VI-aware \LCP.

\textit{Utilizing the objective function.} 
In \cref{subsect:boundtightening-theory}, the proposed \VI-aware \LCP derives variable bounds based on a linear constraint \eqref{linear} from problem \eqref{MIP}.
The derived bounds hold for all feasible solutions of problem \eqref{MIP}.
It is possible to apply the proposed approach based on a linear inequality derived from the objective function along with the bound on the optimal value of the problem.
Specifically, letting $\nu_\LB$ be the lower bound of the optimal objective value of \eqref{MIP}, then $-c^\top x \leq -\nu_{\LB}$ \rev{holds} for all optimal solutions $x^*$ of \eqref{MIP}.
Therefore, we can apply the proposed \VI-aware \LCP to derive variable bounds based on $-c^\top x \leq -\nu_{\LB}$.
The derived bounds hold for all optimal solutions of problem \eqref{MIP}.
Similarly, letting $\nu_{\UB}$ be the upper bound of the optimal objective value of \eqref{MIP}, then we can apply the proposed \VI-aware \LCP to derive variable bounds based on $c^\top x \leq \nu_{\UB}$.



\textit{Deriving more \VIs.}  
Apart from deriving global variable bound tightening for \eqref{MIP}, the proposed \VI-aware \LCP can also be used to derive additional \VIs.
As an example, let us consider two binary variables $x_r$ and $x_p$ that cannot be fixed by \cref{alg:boundtightening-binary} and 
the following optimization problem:
\begin{equation}\label{two-1}
	\UB^\prime := \max \left\{x_r + x_p ~\mid~ \eqref{linear}\text{--}\eqref{intCons},~\eqref{dp-imp-binary1}\text{--}\eqref{dp-imp-binary4},~ \eqref{tmp-imp-1}\text{--}\eqref{tmp-imp-4} \right\},
\end{equation}
where \eqref{tmp-imp-1}\text{--}\eqref{tmp-imp-4} are \VIs of $x_p$:
\begin{subequations}
	\begin{align}
		x_p = 0 &\to x_j \ge \ell_{pj}, \quad \forall~j \in \N_0^\ge(p), \label{tmp-imp-1} \\
		x_p = 1 &\to x_j \ge \ell_{pj}, \quad \forall~j \in \N_1^\ge(p), \label{tmp-imp-2}
	\end{align}
	\begin{align}
		x_p = 0 &\to x_j \le u_{pj}, \quad \forall~j \in \N^\le_0(p), \label{tmp-imp-3} \\
		x_p = 1 &\to x_j \le u_{pj}, \quad \forall~j \in \N^\le_1(p), \label{tmp-imp-4}
	\end{align}
\end{subequations}
Compared with \eqref{bin-prob2}, problem \eqref{two-1} incorporates \VIs in \eqref{tmp-imp-1}\text{--}\eqref{tmp-imp-4} of $x_p$, with the objective function $x_r + x_p$.
Now, if $\UB^\prime \le 1$, we can derive a new \VI $x_r + x_p \le 1$.
To check whether $\UB^\prime \le 1$, we define the following implied activity for variables $x_r$ and $x_p$ as follows:
\begin{equation}\label{two-implied}
	w_{r,p}(v_1,v_2) = \min_{x_r = v_1,\ x_p = v_2} \left\{\sum_{j \in \N} a_j x_j ~\Bigm|~  \eqref{bounds}\text{--}\eqref{intCons},~\eqref{dp-imp-binary1}\text{--}\eqref{dp-imp-binary4},~  \eqref{tmp-imp-1}\text{--}\eqref{tmp-imp-4} \right\}.
\end{equation}
Then $w_{r,p}(1,1) > b_0$ if and only if $\UB^\prime \le 1$.
To compute $w_{r,p}(1,1)$, we can fix \rev{$x_r = x_p = 1 $}, and use the \VIs \rev{~in} \eqref{dp-imp-binary1}--\eqref{dp-imp-binary4} and \eqref{tmp-imp-1}--\eqref{tmp-imp-4} to tighten the variable bounds:
\begin{subequations}
	\setlength{\columnsep}{0.5em}
	\begin{multicols}{2}
		\noindent
		\begin{align*}
			x_j& \ge \ell_{rj},~ \forall~ j \in \N_1^\ge(r) \backslash \N_1^\ge(p), \\   
			x_j& \ge \ell_{pj},~ \forall~ j \in \N_1^\ge(p) \backslash \N_1^\ge(r), \\   
			x_j& \ge \max \{\ell_{rj}, \ell_{pj}\},~ \forall~ j \in \N_1^\ge(r) \cap \N_1^\ge(p), 
		\end{align*}
		\begin{align*}
			x_j& \le u_{rj},~ \forall~ j \in \N_1^\le(r) \backslash \N_1^\le(p), \\   
			x_j& \le u_{pj},~ \forall~ j \in \N_1^\le(p) \backslash \N_1^\le(r), \\   
			x_j& \le \min \{u_{rj}, u_{pj}\},~ \forall~ j \in \N_1^\le(r) \cap \N_1^\le(p). 
		\end{align*} 
	\end{multicols}
\end{subequations}\noindent
Note that if there does not exist a point $x$ satisfying the bounds,  then problem \eqref{two-implied} is infeasible and $w_{r,p}(1,1) = +\infty$, implying that $x_r+x_p\leq1$ holds.
Otherwise, 
\begin{equation*}
	w_{r,p}(1,1) = \sum_{j \in \N_1^\ge(r) \backslash \N_1^\ge(p)} a_j \ell_{rj} + \sum_{j \in \N_1^\ge(p) \backslash \N_1^\ge(r)} a_j \ell_{pj} + \sum_{j \in \N_1^\ge(r) \cap \N_1^\ge(p)} a_j \max \{\ell_{rj}, \ell_{pj}\} + a_r + a_p.
\end{equation*}
Similarly, we can derive $x_p \le x_r,~ x_r \le x_p$, and $x_r + x_p \ge 1$, by checking whether $w_{r,p}(0,1)> b_0$, $ w_{r,p}(1,0)>b_0$, and $w_{r,p}(0,0)>b_0$, hold respectively.

Note that deriving \VIs for all possible combinations $\{\left(r, p, v_1, v_2\right) ~\mid~ r,p \in \B,~ v_1 ,v_2 \in \left\{0,1\right\}\}$ could be time-consuming.
To save the computational effort, 
we can skip combinations $\left(r, p, v_1, v_2\right)$ with $w_r(v_1) + w_p(v_2) \le b_0$, as $w_{r,p}(v_1,v_2) \le w_r(v_1) + w_p(v_2) $, where $w_r(v_1)$ and $w_p(v_2)$ are computed by steps \ref{line-bin-loop-start}--\ref{line-bin-loop-end} of \cref{alg:boundtightening-binary} (see the online Appendix \ref{appendixB1}).
In particular, 
we sort $\{ w_r(0) \}_{r \in \B}$ and $\{ w_r(1) \}_{r \in \B}$ in  a non-increasing order  and then, for each $(v_1, v_2) \in \{0,1\}^2$, we apply a two-level loop to iteratively check whether $w_{r,p}(v_1,v_2) > b_0$ if $w_r(v_1) + w_p(v_2) > b_0$.
To further avoid too much computational effort, we stop each loop for 10 consecutive combinations, if no new \VI is derived.


%% file: section_computational.tex
\section{Computational results}\label{section-computational}

In this section, we evaluate the performance impact of the \VI aggregation in \cref{section-aggregation} and the \VI-aware \LCP in \cref{section-boundtightening}.
The two proposed presolve techniques are implemented within the open-source MIP solver HiGHS \citep{Huangfu2018}, version 1.12.0. 
Note that standard clique merging and LCP have been implemented in HiGHS, and will be included in all our computational experiments.
The experiments are executed on a Linux cluster equipped with Intel(R) Xeon(R) Platinum 8358 CPUs running at 2.60 GHz. We employ the serial version of HiGHS with a time limit of 7200 seconds for each run.
We test our algorithms on the MIPLIB~2017 benchmark testset \citep{Gleixner2021}. 
For each of the 240 problems, we use five random seeds, and each problem and seed combination is regarded as an independent observation, referred to as an ``instance''. 
This results in a testbed of 1200 instances used throughout this section.

\subsection{Variable implication aggregation}

\begin{table}[t]
	\renewcommand{\arraystretch}{1.4}
	\addtolength{\tabcolsep}{2pt}
	\centering
	\caption{Performance impact of applying \VI aggregation.}
	\scriptsize
	\begin{tabular}{{|l|r|r|r|r|r|r|r|r|r|r|}} \hline
		&  & \multicolumn{3}{c|}{\Default} & \multicolumn{4}{c|}{\vbca} & \multicolumn{2}{c|}{Compare} \\ \hline
		Bracket & \tblIns & \tblS & \tblT & \tblN & \tblS & \tblT & \tblN & \tblTa & \tblT & \tblN \\ \hline
		$\ge$    0 & 761 &   746 &     242.62 &     798.78 &   755 &     238.19 &     795.22 &       0.18 &       0.98 &       1.00 \\ \hline
		$\ge$   10 & 693 &   678 &     361.28 &    1293.54 &   687 &     352.39 &    1284.53 &       0.20 &       0.98 &       0.99 \\ \hline
		$\ge$  100 & 494 &   479 &     919.39 &    4865.24 &   488 &     886.96 &    4830.84 &       0.25 &       0.96 &       0.99 \\ \hline
		$\ge$ 1000 & 254 &   239 &    2474.22 &   16692.54 &   248 &    2305.15 &   16507.48 &       0.29 &       0.93 &       0.99 \\ \hline
		Affected & 80 &    65 &     961.95 &    2791.34 &    74 &     802.40 &    2639.06 &       0.31 &       0.83 &       0.95 \\ \hline
	\end{tabular}
	\label{table:vbca-MIPLIB2017}
\end{table}

First, we evaluate the performance impact of the \VI aggregation; see \cref{section-aggregation}.
To this end, we compare the default setting of HiGHS, denoted as \Default, with the setting \vbca, in which the proposed \VI aggregation is implemented. 
The results are summarized in \cref{table:vbca-MIPLIB2017}.
In \cref{table:vbca-MIPLIB2017}, the ``$\ge n$'' bracket represents the set of instances that are solved by at least one of the two settings under comparison, for which the slower of the two requires at least $n$ seconds. Increasing $n$ excludes easier instances and thus yields a hierarchy of instance subsets with increasing difficulty \citep{Achterberg2013b}. 
The last row ``Affected'' contains a subset of instances in ``$\ge 0$'' with different solving paths between the two settings, as indicated by changes in either the number of nodes or the simplex iterations \citep{Achterberg2013b}. 
Column \tblIns denotes the number of instances solved by at least one setting (in total, 761 instances are solved by either \Default or \vbca). 
Column \tblS reports the number of instances solved by each setting within each bracket. 
Columns \tblT, \tblN, and \tblTa denote the shifted geometric means of total CPU time (in seconds), node number, and the CPU time used by executing the proposed \VI aggregation (in seconds), respectively, all using a shift of 1. 
Under ``Compare'', we report the ratios of the shifted geometric means for \tblT and \tblN; a value less than 1.0 indicates that \vbca outperforms \Default. 
Detailed statistics of instance-wise computational results in \cref{table:vbca-MIPLIB2017} and the subsequent tables in this paper can be found in the {GitHub repository} \url{https://github.com/Changlong-Lii/VI-Presolve}.

From \cref{table:vbca-MIPLIB2017}, we first observe that the overhead of \VI aggregation is very small; 
it uses 0.18 seconds on average for each instance.
In contrast, applying this technique can achieve a better performance.
Overall, the CPU time returned by \vbca is 2\% smaller than that returned by \Default, with nine more instances being solved.
Moreover, for the 80 affected instances, we can observe a larger improvement, with a 17\% reduction on CPU time and a 5\% reduction on node number.
Note that it is reasonable to see the time reduction exceeds the node reduction, since one primary benefit of the proposed \VI aggregation comes from the reduction on the number of constraints; see \cref{section-aggregation}.
Indeed, for some affected instances, we can observe a tremendous reduction on the number of constraints---for problem \texttt{neos-4763324-toguru} with five seeds, the number of constraints decreases from 106723 to 53824;
for problem \texttt{neos-631710} with five seeds, the number of constraints decreases from 169576 to 4741.   
These results suggest that incorporating the \VI aggregation into HiGHS can indeed make a better performance.

\subsection{Variable implication-aware linear constraint propagation}

\begin{table}[t]
	\renewcommand{\arraystretch}{1.4}
	\addtolength{\tabcolsep}{2pt}
	\centering
	\caption{Performance impact of applying \VI-aware \LCP (with \Default as baseline).}
	\scriptsize
	\begin{tabular}{{|l|r|r|r|r|r|r|r|r|r|r|}} \hline
		&  & \multicolumn{3}{c|}{\Default} & \multicolumn{4}{c|}{\tobj} & \multicolumn{2}{c|}{Compare} \\ \hline
		Bracket & \tblIns & \tblS & \tblT & \tblN & \tblS & \tblT & \tblN & \tblTb & \tblT & \tblN \\ \hline
		$\ge$    0 & 759 &   746 &     240.45 &     806.24 &   751 &     234.01 &     762.64 &       0.85 &       0.97 &       0.95 \\ \hline
		$\ge$   10 & 695 &   682 &     350.32 &    1257.55 &   687 &     340.10 &    1189.56 &       0.95 &       0.97 &       0.95 \\ \hline
		$\ge$  100 & 500 &   487 &     875.40 &    4540.57 &   492 &     826.48 &    4246.95 &       1.16 &       0.94 &       0.94 \\ \hline
		$\ge$ 1000 & 265 &   252 &    2283.61 &   14627.37 &   257 &    2047.67 &   13158.75 &       1.46 &       0.90 &       0.90 \\ \hline
		Affected & 400 &   387 &     378.40 &     470.64 &   392 &     351.68 &     422.31 &       1.64 &       0.93 &       0.90 \\ \hline
	\end{tabular}
	\label{table:3-MIPLIB2017}
\end{table}

In this section, we evaluate the performance impact of the proposed \VI-aware \LCP; see \cref{section-boundtightening}. 
To do this, we first compare \tobj, in which \VI-aware \LCP is applied, with the default setting of \HiGHS (\Default).
The computational results are summarized in \cref{table:3-MIPLIB2017}, where under column \tblTb, we additionally report 
the shifted geometric mean of the CPU time used by executing the proposed \VI-aware \LCP (in seconds, with a shift of 1).
From the table, we observe that the average CPU time spent on implementing  the proposed \VI-aware \LCP is 0.85 seconds, demonstrating that only a modest average overhead is incurred.
In sharp contrast, applying the \VI-aware \LCP can lead to a notable performance improvement compared to the default setting of HiGHS.
Overall, \tobj achieves a reduction of 3\% in CPU time and 5\% in node number, with five more instances being solved.
For the challenging instances that require at least 1000 seconds to be solved by at least one setting, we can even observe a 10\% CPU time reduction and a 10\% node number reduction, suggesting that the performance of \tobj becomes more pronounced as the difficulty of instances increases.
Note that different from the \VI aggregation where only a small fraction of instances is affected, the proposed \VI-aware \LCP affects $400$ instances among $759$ instances, demonstrating its broad applicability within the MIPLIB 2017 benchmark testset.
For these affected instances, \tobj achieves a relatively larger performance improvement; overall, a  7\% CPU time reduction and a 10\% node number reduction can be observed.

\begin{table}[t]
	\renewcommand{\arraystretch}{1.4}
	\addtolength{\tabcolsep}{2pt}
	\centering
	\caption{Comparison of \VI-aware \LCP with the state-of-the-art approach of \settingAchterberg.}
	\scriptsize
	\begin{tabular}{{|l|r|r|r|r|r|r|r|r|r|}} \hline
		&  & \multicolumn{3}{c|}{\settingAchterberg} & \multicolumn{3}{c|}{\tobj} & \multicolumn{2}{c|}{Compare} \\ \hline
		Bracket &           \tblIns &              \tblS &               \tblT &               \tblN &              \tblS &               \tblT &               \tblN &               \tblT &               \tblN \\ \hline
		$\ge$    0   &      758       &       746      &       237.13   &       829.93   &       751      &       232.95   &       776.37   &         0.98   &         0.94   \\ \hline
		$\ge$   10   &      694       &       682      &       344.96   &      1297.44   &       687      &       338.60   &      1212.22   &         0.98   &         0.93   \\ \hline
		$\ge$  100   &      498       &       486      &       855.46   &      4787.79   &       491      &       826.48   &      4419.92   &         0.97   &         0.92   \\ \hline
		$\ge$ 1000   &      260       &       248      &      2238.84   &     16724.29   &       253      &      2097.98   &     14619.18   &         0.94   &         0.87   \\ \hline
		Affected &      388       &       376      &       372.15   &       555.13   &       381      &       354.07   &       485.66   &         0.95   &         0.87   \\ \hline
	\end{tabular}
	\label{table:2013-MIPLIB2017}
\end{table}

Next, we compare the performance of the proposed \VI-aware \LCP with the state-of-the-art approach of \citet{Achterberg2013a}, where we recall that the latter tightens variable bounds based on only knapsack constraints (obtained from the problem or constraint relaxation) and \VIs; see \cref{compare} for a detailed discussion.
The comparison results of \tobj with the presolve technique of \citet{Achterberg2013a} are summarized in \cref{table:2013-MIPLIB2017}.
From \cref{table:2013-MIPLIB2017}, we observe that the proposed \VI-aware \LCP can indeed lead to a better performance than the state-of-the-art approach of \citet{Achterberg2013a}.
Overall, a reduction of 2\% on CPU time and 6\% on node number can be achieved, with five more instances being solved.
For the 388 affected instances, the improvements are more pronounced, with a reduction of 5\% on CPU time and 13\% on node number.

The improvements of the proposed \tobj over the approach of \citet{Achterberg2013a} may come from the following two aspects: (1) better bound tightenings could be derived by the proposed \VI-aware \LCP, as it is based on a general linear constraint (rather than a knapsack relaxation) with more \VI information exploited; see \cref{compare}; and (2) more bound tightenings and \VIs could be derived, as it is enhanced by the two techniques in \cref{extent}.
To further justify this, we perform another experiment in which the \VI-aware \LCP is applied only to linear constraints in the problem without the two enhancements in \cref{extent} (denoted as \scons), and compare it with \tobj and the approach of \citet{Achterberg2013a}.
The results are summarized in \cref{table:2013-scons,table:extension} and are detailed as follows.

\begin{table}[t]
	\renewcommand{\arraystretch}{1.4}
	\addtolength{\tabcolsep}{2pt}
	\centering
	\scriptsize
	\caption{\centering Comparison of \VI-aware \LCP (excluding two enhancements) with the state-of-the-art approach of \settingAchterberg.}
	\begin{tabular}{{|l|r|r|r|r|r|r|r|r|r|}} \hline
		&  & \multicolumn{3}{c|}{\settingAchterberg} & \multicolumn{3}{c|}{\scons} & \multicolumn{2}{c|}{Compare} \\ \hline
		Bracket & \tblIns & \tblS & \tblT & \tblN & \tblS & \tblT & \tblN & \tblT & \tblN \\ \hline
		$\ge$    0 & 749 &   746 &     227.57 &     845.39 &   747 &     227.91 &     840.97 &       1.00 &       0.99 \\ \hline
		$\ge$   10 & 684 &   681 &     333.27 &    1322.88 &   682 &     333.76 &    1315.31 &       1.00 &       0.99 \\ \hline
		$\ge$  100 & 483 &   480 &     849.21 &    5154.30 &   481 &     844.11 &    5085.56 &       0.99 &       0.99 \\ \hline
		$\ge$ 1000 & 240 &   237 &    2281.29 &   18571.06 &   238 &    2250.34 &   18078.11 &       0.99 &       0.97 \\ \hline
		Affected & 62 &    59 &     555.11 &     663.90 &    60 &     524.10 &     619.99 &       0.94 &       0.93 \\ \hline
	\end{tabular}
	\label{table:2013-scons}
\end{table}

First, from \cref{table:2013-scons}, we observe that overall,
applying the proposed \VI-aware \LCP without the two enhancements only slightly outperforms the approach of \citet{Achterberg2013a}. 
For the 62 affected instances, however, the proposed approach can notably improve the performance, obtaining a CPU time reduction of 6\% and a node reduction of 7\%.
This further confirms the results in \cref{compare}, i.e., the proposed \VI-aware \LCP (without the two \rev{enhancements} in \cref{extent}) could detect better bound tightenings than those detected by the approach of \cite{Achterberg2013a}.


\begin{table}[t]
	\renewcommand{\arraystretch}{1.4}
	\addtolength{\tabcolsep}{2pt}
	\centering
	\caption{Performance impact of applying the two enhancements of \VI-aware \LCP.}
	\scriptsize
	\begin{tabular}{{|l|r|r|r|r|r|r|r|r|r|}} \hline
		&  & \multicolumn{3}{c|}{\scons} & \multicolumn{3}{c|}{\tobj} & \multicolumn{2}{c|}{Compare} \\ \hline
		Bracket &         \tblIns &           \tblS &           \tblT &           \tblN &           \tblS &           \tblT &           \tblN &           \tblT &           \tblN \\ \hline
		$\ge$    0   &      757       &       747      &       236.41   &       827.41   &       751      &       231.89   &       777.16   &         0.98   &         0.94   \\ \hline
		$\ge$   10   &      693       &       683      &       343.95   &      1291.43   &       687      &       337.11   &      1211.97   &         0.98   &         0.94   \\ \hline
		$\ge$  100   &      498       &       488      &       843.56   &      4654.43   &       492      &       819.40   &      4347.30   &         0.97   &         0.93   \\ \hline
		$\ge $ 1000   &      256       &       246      &      2221.15   &     17173.08   &       250      &      2126.03   &     15994.35   &         0.96   &         0.93   \\ \hline
		Affected &      380       &       370      &       355.79   &       553.41   &       374      &       340.33   &       487.10   &         0.96   &         0.88   \\ \hline
	\end{tabular}
	\label{table:extension}
\end{table}

To verify the second reason (i.e., the effect of the two enhancements in \cref{extent}), we compare the setting \tobj with \scons.
From \cref{table:extension}
we observe that applying the two enhancements in \VI-aware \LCP can lead to a better performance.
Overall, \tobj achieves a reduction of 2\% in CPU time and 6\% in node number.
For the 380 affected instances, we can even observe a CPU time reduction of 4\% and a node reduction of 12\%.
This demonstrates that the two proposed enhancements can effectively enhance the capability of \VI-aware \LCP.


\subsection{Overall performance of the two proposed presolve techniques}

\begin{table}[t]
	\renewcommand{\arraystretch}{1.4}
	\addtolength{\tabcolsep}{2pt}
	\centering
	\caption{Performance impact of using both \VI aggregation and \VI-aware \LCP.}
	\scriptsize
	\begin{tabular}{{|l|r|r|r|r|r|r|r|r|r|}} \hline
		&  & \multicolumn{3}{c|}{\Default} & \multicolumn{3}{c|}{\all} & \multicolumn{2}{c|}{Compare} \\ \hline
		Bracket &           \tblIns &             \tblS &               \tblT &               \tblN &             \tblS &               \tblT &               \tblN &               \tblT &               \tblN \\ \hline
		$\ge$    0   &      766       &       746      &       248.06   &       805.41   &       760      &       238.86   &       757.10   &         0.96   &         0.94   \\ \hline
		$\ge$   10   &      704       &       684      &       356.60   &      1228.32   &       698      &       341.19   &      1153.02   &         0.96   &         0.94   \\ \hline
		$\ge$  100   &      506       &       486      &       904.73   &      4444.69   &       500      &       833.40   &      4126.93   &         0.92   &         0.93   \\ \hline
		$\ge$ 1000   &      273       &       253      &      2341.07   &     14135.31   &       267      &      2004.70   &     12631.43   &         0.86   &         0.89   \\ \hline
		Affected &      428       &       408      &       356.69   &       439.63   &       422      &       325.32   &       392.27   &         0.91   &         0.89   \\ \hline
	\end{tabular}
	\label{table:4-MIPLIB2017}
\end{table}

Finally, we evaluate the overall performance impact of the two proposed presolve techniques---the \VI aggregation and the \VI-aware \LCP---in \HiGHS. 
To this end, we compare the setting \all, in which the two proposed presolve techniques are implemented, with the default setting of \HiGHS.
The comparison results are summarized in \cref{table:4-MIPLIB2017}.
From \cref{table:4-MIPLIB2017}, applying the two proposed presolve techniques leads to a much better performance.
Overall, \all achieves a reduction of 4\% in CPU time and 6\% in node number, with 14 more instances being solved.
For the 428 affected instances, a more pronounced reduction is observed: 9\% in CPU time and 11\% in node number.
Note that compared with applying the individual presolve technique (see \cref{table:vbca-MIPLIB2017} and \cref{table:3-MIPLIB2017}), applying both presolve techniques enables \rev{HiGHS} to solve more instances (760) and find reductions on more instances (428), making a better overall performance.

%% file: section_conclusion.tex
\section{Conclusion}\label{section-conclusion}

In this paper, we proposed two novel presolve techniques for \MIP problems: the \VI aggregation and \VI-aware \LCP.
The \VI aggregation attempts to aggregate multiple \VIs into a single inequality using the clique information,
which may not only tighten the LP relaxation but also reduce the size of the problem.
The proposed \rev{presolve} technique extends the standard clique merging \citep{Achterberg2020,Brito2021}, where reductions on \VIs involving non-binary variables are additionally performed.
The \VI-aware \LCP builds on standard \LCP  but involves the \VIs to derive more presolve reductions.
We showed that deriving the tightest lower or upper bound  for a variable from a system consisting of a linear constraint, variable bounds, integrality constraints, and the \VIs corresponding to the variable to be tightened can be conducted in linear time, which allows the proposed VI-aware LCP to be embedded into an \MIP solver.
We analyzed the relation of the proposed \VI-aware \LCP to the state-of-the-art approach of \citet{Achterberg2013a}, and demonstrated that the bounds derived by our proposed \rev{presolve} technique could be tighter.
Moreover, we also developed two \rev{enhancements} to further improve the effectiveness of the proposed \VI-aware \LCP.
By extensive computational experiments on the MIPLIB 2017 benchmark testset, we demonstrated the advantage of the proposed \VI-aware \LCP over the state-of-the-art approach of \citet{Achterberg2013a} and the effectiveness of the proposed \VI aggregation and \VI-aware \LCP in improving the performance of HiGHS.
In particular, using the two proposed presolve techniques, a reduction of 4\% in solving time and 6\% in node number on \HiGHS can be achieved.

%% file: section_appendix.tex
\newpage
    \author{Chen et al.}
    \title{Online appendix of the paper ``Exploiting Variable Implications in Presolve for Mixed Integer Programming''}

    {\noindent \Large \bf Online appendices of the paper ``Exploiting Variable Implications in Presolve for Mixed Integer Programming''}\\[20pt]

    \begin{center}
        {\large Wei-Kun Chen, Chang-Long Li, Zhao-Wei Wang, Yu-Hong Dai, Zi-Shuo Li, Meng Lu}
    \end{center}
    \vspace{0.5cm}

    \setcounter{page}{1}

    \begin{NoHyper}
        \renewcommand{\theHsection}{\thesection}

    \section{An illustrative example for function $w_r(\cdot)$}\label{appendixC}
      
    \begin{example}\label{example2} (\cref{example1} continued)
        To derive tighter bounds for the non-binary variable $x_5$, we consider the following three \VIs in \cref{example1}:
        \begin{subequations}
        	\begin{multicols}{3}
        		\noindent
        		\begin{align}
        			\label{ee-imp-1}
        			\tag{\ref*{ex-D}}
        			&{x_5 > 0.5 \to x_2 = 0},
        		\end{align}
        		\begin{align}
        			\label{ee-imp-2}
        			\tag{\ref*{e-imp4}}
        			&x_5 < 1 \to x_2 = 1,
        		\end{align}
        		\begin{align}
        			\label{ee-imp-3}
        			\tag{\ref*{e-imp5}}
        			&x_5 > 2 \to x_3 = 1.
        		\end{align} 
        	\end{multicols}
        \end{subequations}\noindent
        Here, \VIs in \eqref{e-imp3}--\eqref{e-x4} are not considered, as we have already fixed $x_1 = 1$ in \cref{example1}.
        From \VIs in \eqref{ee-imp-2}--\eqref{ee-imp-3}, we can obtain $\mathcal{B}_1^<(5) = \{2\}$, $\ell_{25} = 1$, and $\mathcal{B}_1^>(5) = \{3\}$, $u_{35} = 2$,
        {from which we can decompose the domain $[0, 3]$ into $[0,1] \cup [1,2] \cup [2,3]$.
        From \VIs \rev{~in} \eqref{ee-imp-1}--\eqref{ee-imp-2}, we obtain $\D = \left(0.5, 1\right)$.
        }
        Therefore, we obtain
        \begin{equation*}
            [0, 0.5] \cup [1, 2] \cup [2, 3].
        \end{equation*}
        On each of the three corresponding open intervals $(0, 0.5),~ (1,2),~ (2,3)$, function \rev{$w_5(\cdot)$} is linear.
        Specifically, we obtain
        \begin{equation}\label{ex-w}
            \begin{aligned}
                w_{5}(d) & = \sum_{\substack{i \in \{2\},~ d < \ell_{i5},~ d \notin \D}} a_i  + \sum_{\substack{i \in \{3\},~ d > u_{i5},~ d \notin \D}} a_i  + 0.5d \\
                & =  \mathbb{I}{\{d \le 0.5\}} + 0.9 \mathbb{I}{\{d>2\}} + 0.5d
                = \left \{  \begin{array}{ll}
                    0.5d + 1 & ~\text{if}~d \in [0,0.5], \\[5pt]
                    0.5d &  ~\text{if}~d \in [1,2],\\[5pt]
                    0.5d + 0.9& ~\text{if}~d \in (2,3],
                    \end{array}
                \right.
            \end{aligned}
        \end{equation}
        where $\mathbb{I}\{\cdot\}$ is the 0-1 indicator function. \cref{fig:w3} plots the function value of \rev{$w_5(\cdot)$}.
        As shown in \cref{fig:w3}, $w_5(\cdot)$ is a piecewise linear function; however, it is not monotonic on the {feasible domain $[0, 0.5] \cup [1, 3]$.}
        
        \begin{figure}[htbp]
            \centering
            \begin{tikzpicture}[
                scale=1.5,
                thick,
                >=stealth
                ]
                
                \draw[->] (-0.3,0) -- (3.5,0) node[right] {$d$};
                \draw[->] (0,-0.5) -- (0,2.5) node[above] {$w_5(d)$};
                
                \foreach \x in {0,1,2,3}
                \draw (\x,0.1) -- (\x,-0.1) node[below=-1pt] {\(\x\)};
                
                \foreach \y in {0,1,2}
                \draw (0.1,\y) -- (-0.1,\y) node[left=-1pt] {\(\y\)};
                
                \filldraw[red] (0,1) circle (1.5pt);
                \node[below right=-1pt] at (0,1) {\textcolor{red}{\small (0,\;1)}};
                \draw[blue, thick] (0,1) -- (0.5,1.25);
                \filldraw[red] (0.5,1.25) circle (1.5pt);

                \node[below right=-1pt] at (0.5,1.25) {\textcolor{red}{\small (0.5,\;1.25)}};
                
                \filldraw[red] (1,0.5) circle (1.5pt);
                \node[below right=-1pt] at (1,0.5) {\textcolor{red}{\small (1,\;0.5)}};
                \draw[blue, thick] (1,0.5) -- (2,1.0);
                
                \filldraw[red] (2,1.0) circle (1.5pt);
                \node[below right=-1pt] at (2,1.0) {\textcolor{red}{\small (2,\;1)}};
                
                \draw[red, fill=white] (2,1.9) circle (1.5pt);
                \node[below right=-1pt] at (2,1.9) {\textcolor{red}{\small (2,\;1.9)}};
                \draw[blue, thick] (2,1.9) -- (3,2.4);
                
                \filldraw[red] (3,2.4) circle (1.5pt);
                \node[below right=-1pt] at (3,2.4) {\textcolor{red}{\small (3,\;2.4)}};
                
            \end{tikzpicture}
            \caption{\centering Illustration of function $w_5(\cdot)$ in \cref{example2}.}
            \label{fig:w3}
        \end{figure}
    \end{example}
    
    \begin{example}(\cref{example1} continued)\label{example3}
    	Consider the \rev{function $w_5(\cdot)$} computed in \eqref{ex-w}.
    	First, we note that all the endpoints are {$\dunderbar_1 = 0,~ \bar{d}_1 = 0.5,~ \dunderbar_2 = 1,~ \bar{d}_2 = \dunderbar_3 = 2,~ \bar{d}_3 = 3$.}
    	Applying \cref{non-01-prop}, we obtain
    	\begin{equation*}
    		\LB_1 = 0,~ \UB_1 = 0.5,~ \LB_2 = 1,~ \UB_2 = 2,~ \LB_3 = 2,~ \UB_3 = 2.2.
    	\end{equation*}
    	Therefore, we can tighten the upper bound of $x_5$ as {$\max\{\UB_1, \UB_2, \UB_3\} = 2.2 < 3$}. 
    \end{example}


       

    \section{Comparing the proposed \VI-aware \LCP with the presolve technique in \citet{Achterberg2013a}: examples}\label{appendixA}

        \begin{example}\label{tmpex}
            Consider the following knapsack constraint 
            \begin{subequations}
                \begin{align}
                    & x_1 + x_2 + x_3 + x_4 + 0.1x_5 \le 2, \label{example-linear}\\
                    & x_i \in \left\{0,1\right\},~ i = 1,2,3,4,5, \label{example-bounds}
                \end{align}
            \end{subequations}
            and the following \VIs of $x_6 \in [0,4]$:
            \begin{subequations}
                \begin{multicols}{2}
                    \noindent
                    \begin{align}
                        &x_6 < 3 \to x_1 = 1, \label{example-imp1} \\
                        &x_6 < 3 \to x_2 = 1, \label{example-imp2} \\
                        &x_6 < 2 \to x_3 = 1, \label{example-imp3}
                    \end{align}
                    \begin{align}
                        &x_6 < 2 \to x_4 = 1, \label{example-imp4} \\
                        &x_6 > 1 \to x_5 = 1. \label{example-imp5}
                    \end{align}
                \end{multicols}
            \end{subequations}\noindent
            For \VIs in \eqref{example-imp1}--\eqref{example-imp5}, we obtain $\B_1^<(6) = \left\{1,2,3,4\right\},~ \ell_{16} = \ell_{26} = 3,~ \ell_{36} = \ell_{46} = 2$,  $\B_1^>(6) = \{5\}$, and $u_{56} = 1$.
            Using the presolve technique in \citet{Achterberg2013a} to tighten the lower bound of variable $x_6$, we obtain a new lower bound for $x_6$:
            \begin{align}
                \LBhat = \max \left\{\ell_{i_0 6},~ i_0 \in \{1,2,3,4\} ~\Bigm|~ \sum_{\ell_{i_0 6} \le \ell_{i 6},~ i \in \{1,2,3,4\}} a_i > 2\right\} = 2.
            \end{align}
            However, using the proposed \VI-aware \LCP, we can derive a tighter lower bound for $x_6$:
            \begin{align}
                \LB = \min \left\{d \in \left[0, 4\right] ~\Bigm|~ \sum_{\substack{i \in \B^{<}_1(6),~ d < \ell_{i6}}} a_i  + \sum_{\substack{i \in \B^{>}_1(6),~ d > u_{i6}}} a_i \le 2\right\} = 3. 
            \end{align}
            This shows that  the proposed \VI-aware \LCP can indeed derive strictly tighter bounds than those derived by the presolve technique in \citet{Achterberg2013a}.
        \end{example}

        \begin{example}\label{tmp-eex}
            Consider a variant of \cref{tmpex}, where the knapsack constraint \eqref{example-linear} is replaced by the following general linear constraint:
            \begin{align}\label{tmpcons}
                & x_1 + x_2 + x_3 + x_4 + 0.1 x_5 + 0.2 x_6 \le 2.2,
            \end{align}
            and the \VIs of $x_6 \in [0,4]$ are  \eqref{example-imp1}--\eqref{example-imp4}.
            Using  the proposed \VI-aware \LCP, we can 
            tighten the lower bound of $x_6$ as 
            \begin{align}
                \LB = \min \left\{d \in \left[0, 4\right] ~\Bigm|~ \sum_{\substack{i \in \B^{<}_1(6),~ d < \ell_{i6}}} a_i   + 0.2 d \le 2.2\right\} = 3.
            \end{align}
                However, using the state-of-the-art approach of \citet{Achterberg2013a}, we need to first relax the non-binary variable $x_6$ from constraint \eqref{tmpcons} using, e.g., the variable bound $x_6 \ge 0$, obtaining a knapsack constraint: 
            \begin{align}
                \label{relaxed-knap-tmp}
                & x_1 + x_2 + x_3 + x_4 + 0.1x_5 \le 2.2;
            \end{align}
            \rev{then} the lower bound of $x_6$ derived from \eqref{example-bounds}, \eqref{example-imp1}--\eqref{example-imp4}, and \eqref{relaxed-knap-tmp} is $\LB'=2< \LB$.
            This demonstrates that the proposed \VI-aware \LCP (without relaxing non-binary variables from the constraint) could derive strictly tighter variable bounds than those derived by the state-of-the-art approach of \cite{Achterberg2013a}.
        \end{example}

        \section{Algorithm framework for the proposed \VI-aware \LCP}\label{appendixB}

        \subsection{Algorithm description}\label{appendixB1}
        The algorithm for applying the \VI-aware \LCP is summarized in \cref{alg:boundtightening-binary}.

        \begin{algorithm}[!htbp]
            \caption{Implementation of the \VI-aware \LCP}
            \label{alg:boundtightening-binary}
            {\fontsize{8.5pt}{12pt}\selectfont
                \begin{algorithmic}[1]
                    \State \textbf{Input:} Normalized constraint \eqref{linear}, variable upper bounds $0 \leq x \leq u$, integer set $\I$, clique table $\mathfrak{C}$, and \VI graph $G$.
                    \State \textbf{Output:} Tightened variable bounds for all variables.
                    \commentInline{Tighten the lower and upper bounds of binary variables}
                    \State Initialize $w_r(0) := 0$ and $w_r(1) := a_r$ for all $r \in \B$, and mark that $a_r$ is included in the computation of $w_r(1)$. \label{line-bin-init}
                    \For{$j \in \N \backslash \B$ with $a_j > 0$}\label{line-nonbin-loop-start}
                        \For{\VIs $x_r = 0 ~\to~ x_j \ge \ell_{rj}$ in $\G$} \label{line-nonbin-imp-1}
                            \State Set $w_r(0) := w_r(0) + a_j \ell_{rj}$.
                        \EndFor
                        \For{\VIs $x_r = 1 ~\to~ x_j \ge \ell_{rj}$ in $\G$}\label{line-nonbin-imp-2}
                            \State Set $w_r(1) := w_r(1) + a_j \ell_{rj}$.
                        \EndFor
                    \EndFor\label{line-nonbin-loop-end}

                    \For{$ i \in \B$ with $a_i > 0$} \label{line-bin-loop-start}
                        \For{$\C = \left(\C^+, \C^-\right) \in \mathfrak{C}$ with $i \in \C^-$}	\label{line-bin-visit-clique-start}
                            \For{$r \in \C^+$ and $a_i$ is not included in the computation of $w_r(1)$}
                                \State Set $w_r(1) := w_r(1) + a_i$, and mark that $a_i$ is included in the computation of $w_r(1)$. \label{line-bin-w1}
                            \EndFor
                            \For{$r \in \C^- \backslash \{i\}$ and $a_i$ is not included in the computation of $w_r(0)$}
                                \State Set $w_r(0) := w_r(0) + a_i$, and mark that $a_i$ is included in the computation of $w_r(0)$.\label{line-bin-w0}
                            \EndFor
                        \EndFor \label{line-bin-visit-clique-end}
                    \EndFor \label{line-bin-loop-end}

                    \For{$r \in \B$}\label{line-bin-fix-start}
                        \State If {$w_r(0) > b_0$}, then {fix $x_r = 1$}.
                        \State If {$w_r(1) > b_0$}, then {fix $x_r = 0$}.
                    \EndFor\label{line-bin-fix-end}
                    

                    \commentInline{Tighten the lower and upper bounds of non-binary variables}
                    \For{$r \in \N \backslash \B$} \label{line-nbin-init}
                        \State Set $w_r(0) := 0$ and $w_r(u_r) := a_r u_r$. \label{line-bin-imp-1-start}
                        \For{{$i \in \B_1^<(r)$ with $a_i > 0$}} 
                            \State Set $w_r(0) := w_r(0) + a_i$.
                        \EndFor
                        \For{{$i \in \B_1^>(r)$ with $a_i > 0$}}
                            \State Set $w_r(u_r) := w_r(u_r) + a_i$.
                        \EndFor \label{line-bin-imp-2-end}
                    
                        \If{$w_r(0) > b_0$ or $w_r(u_r) > b_0$} \label{line-nonb-lbub-start}
                            \State {Compute 
                            	$\left\{\dunderbar_k, \bar{d}_k\right\}_{k=1}^{\tau}$ so that \eqref{d-define-1} and \eqref{d-define-2} hold.}
                            \State For each $k \in \{1, \ldots, \tau\}$, use \cref{non-01-prop} to calculate $\LB_k$ and $\UB_k$.
                            \State Set $\LB$ and $\UB$ as in \eqref{minlbk}.
                        \EndIf \label{line-nonb-lbub-end}
                    \EndFor \label{line-nbin-end}
                \end{algorithmic}
            }
        \end{algorithm}

        \subsection{Techniques for improving the performance of \cref{alg:boundtightening-binary}}\label{appendixB2}

        We present two techniques to improve the performance of \cref{alg:boundtightening-binary} as follows.

        \textit{Early termination.} 
            To further avoid unnecessary computations of checking the clique table, we can track the temporary implied activities $w_r(0),~w_r(1)$, and the maximal remaining sum of coefficients \rev{$\bar{w} := \sum_{i \in \hat{\B}} a_i$} in the loop in steps \ref{line-bin-loop-start}--\ref{line-bin-loop-end}, where $\hat{\B} \subseteq \B$ denotes the set of binary variables $x_i$ with $a_i > 0$ that has not been processed in step \ref{line-bin-loop-start}.
            If $w_r(0) + \bar{w} \le b_0$ or $w_r(1) + \bar{w} \le b_0$ holds in the current iteration, then the final implied activities $w_r(0)$ or $w_r(1)$ cannot exceed $b_0$, implying that variable $x_r$ cannot be fixed to one or zero.
            {We terminate steps \ref{line-bin-loop-start}--\ref{line-bin-loop-end} as soon as every binary variable is either fixed (via $w_r(0) > b_0$ or $w_r(1) > b_0$), or proven to be unfixable (via $w_r(0) + \bar{w} \le b_0$ and $w_r(1) + \bar{w} \le b_0$).}
            Note that for $r \in \hat{\B}$, we can provide an improved condition to ensure that $x_r$ cannot be fixed to one.
            Specifically, for $r \in \hat{\B}$, fixing $x_r = 0$ cannot deduce $x_r = 1$ (from \cref{assum1}(iii)); therefore, if $w_r(0) + \bar{w} - a_r \le b_0$ holds in the current step, then the final implied activity $w_r(0)$ cannot exceed $b_0$, implying that  $x_r$ cannot be fixed to one.
        
        \textit{Sorting variables.} 
            In order to identify uninteresting binary variables as soon as possible, the order in which we iterate binary variables with positive coefficients in step \ref{line-bin-loop-start} of \cref{alg:boundtightening-binary} becomes important.
            Here, we follow \cite{Achterberg2013a} to traverse binary variables with positive coefficients in a non-decreasing order of 
            $$\left|\{\left(\C^+, \C^-\right) \in \mathfrak{C} ~\mid~ \bar{x}_i \in \C^-\}\right| - \frac{10 a_i}{b_0}.$$ 
            With this order, variables with larger coefficients and fewer \VIs are likely to be traversed first, thereby forcing $w_r(0) + \bar{w} \le b_0$ or $w_r(1) + \bar{w} \le b_0$ to be detected quickly (if it holds).

    \end{NoHyper}